\documentclass[a4paper]{amsart}
\usepackage[latin1]{inputenc}
\usepackage{amssymb,amsmath,amsthm}
\usepackage{amsfonts}
\usepackage{paralist}
\usepackage{ifthen}
\usepackage{url}
\usepackage[english]{babel}

\theoremstyle{plain}
\newtheorem{theorem}{Theorem}

\newtheorem{lemma}[theorem]{Lemma}
\newtheorem{corollary}[theorem]{Corollary}

\theoremstyle{remark}

\newtheorem{remark}{Remark}

\theoremstyle{definition}
\newtheorem{example}{Example}

\newcommand{\card}[1]{\ensuremath{\lvert{#1}\rvert}}
\newcommand{\vect}[1]{\ensuremath{\mathbf{#1}}}

\begin{document}
\hyphenation{Bool-ean}

\title{Characterization of preclones by matrix collections}
\date{\today}
\author{Erkko Lehtonen}
\address{Université du Luxembourg \\
Faculté des Sciences, de la Technologie et de la Communication \\
6, rue Richard Coudenhove-Kalergi \\
L--1359 Luxembourg \\
Luxembourg}
\email{erkko.lehtonen@uni.lu}

\begin{abstract}
Preclones are described as the closed classes of the Galois connection induced by a preservation relation between operations and matrix collections. The Galois closed classes of matrix collections are also described by explicit closure conditions.
\end{abstract}

\maketitle


\section{Introduction}

The notion of preclone was introduced by Ésik and Weil~\cite{EW} in a study of recognizable tree languages. Preclones are heterogeneous algebras that resemble clones, but the superposition operation is slightly different from clone composition and membership of certain elements that are present in every clone is not stipulated. Precise definitions will be given in Section~\ref{sec:preliminaries}.

Clones have been described as the closed classes of operation under the Galois connection between operations and relations induced by the preservation relation. This classical Galois theory is known as the $\mathrm{Pol}$--$\mathrm{Inv}$ theory of clones and relations; see~\cite{BKKR,Geiger,Poschel,Szabo}. Similar Galois theories have been developed for other function algebras; see~\cite{CF,Harnau,Hellerstein,Lclusters,Pippenger}. We refer the reader to~\cite{Lclusters} for a brief survey on previous results in this line of research. For general background on clones and other function algebras, see~\cite{DW,Lau,PK,Szendrei}.

The purpose of the current paper is to characterize the preclones of operations in terms of a preservation relation between operations and certain dual objects. These dual objects are called matrix collections. The preservation relation induces a Galois connection between operations and matrix collections, and its closed classes of operations are precisely the locally closed preclones. We also present explicit closure conditions for matrix collections.


\section{Preliminaries}
\label{sec:preliminaries}

\subsection{Preclones}

A \emph{preclone} is a heterogeneous algebra
\[
\mathfrak{C} := \bigl((C^{(n)})_{n \geq 1}; (\ast^n_{m_1, \dots, m_n})_{n \geq 1, m_1, \dots, m_n \geq 1},\, \mathbf{1} \bigr)
\]
consisting of
\begin{enumerate}
\item infinitely many base sets, i.e., disjoint sets $C^{(n)}$ for $n \geq 1$,
\item operations $\ast^n_{m_1, \dots, m_n}$, called \emph{superpositions}, for all $n \geq 1$, $m_1, \dots, m_n \geq 1$, where $\ast^n_{m_1, \dots, m_n}$ is a map from $C^{(n)} \times C^{(m_1)} \times \dots \times C^{(m_n)}$ to $C^{(m)}$, where $m = \sum_{i = 1}^n m_i$
(in order to simplify notation, we will write $f \ast (g_1, \dots, g_n)$ for $\ast^n_{m_1, \dots, m_n}(f, g_1, \dots, g_n)$),
\item a distinguished element $\mathbf{1} \in C^{(1)}$;
\end{enumerate}
satisfying the following three equational axioms:
\begin{equation}
\bigl( f \ast (g_1, \dots, g_n) \bigr) \ast (h_1, \dots, h_m) =
f \ast (g_1 \ast \bar{h}_1, \dots, g_n \ast \bar{h}_n),
\tag{P1}\label{eq:assoc}
\end{equation}
where $f \in C^{(n)}$, $g_i \in C^{(m_i)}$ ($1 \leq i \leq n$), $m = \sum_{i = 1}^n m_i$, $h_j \in C^{\ell_j}$ ($1 \leq j \leq m$), and if we denote $\sum_{j = 1}^i m_i$ by $\hat{m}_i$ ($0 \leq i \leq n$), then $\bar{h}_i = (h_{\hat{m}_{i-1}+1}, \dots, h_{\hat{m}_i})$ ($1 \leq i \leq n$);
\begin{gather}
\mathbf{1} \ast f = f,
\tag{P2}\label{eq:idleft} \\
f \ast (\mathbf{1}, \dots, \mathbf{1}) = f,
\tag{P3}\label{eq:idright}
\end{gather}
where $f \in C^{(n)}$ and $\mathbf{1}$ appears $n$ times on the left-hand side of Axiom \eqref{eq:idright}.

Axiom \eqref{eq:assoc} is a generalization of associativity, and Axioms \eqref{eq:idleft} and \eqref{eq:idright} state that $\mathbf{1}$ is a neutral element.

An \emph{operation} on a nonempty set $A$ is a map $f \colon A^n \to A$ for some integer $n \geq 1$, called the \emph{arity} of $f$. We denote the set of all $n$-ary operations on $A$ by $\mathcal{O}_A^{(n)}$, and we let $\mathcal{O}_A := \bigcup_{n \geq 1} \mathcal{O}_A^{(n)}$. The $i$-th $n$-ary \emph{projection} is the operation $(a_1, a_2, \dotsc, a_n) \mapsto a_i$, and it is denoted by $x_i^{(n)}$.

It is an easy exercise to verify that we can obtain a preclone structure on $(\mathcal{O}_A^{(n)})_{n \geq 1}$ by defining the superposition operations $\ast^n_{m_1, \dots, m_n}$ as follows. For $f \in \mathcal{O}_A^{(n)}$, $g_i \in \mathcal{O}_A^{(m_i)}$ ($1 \leq i \leq n$), we let $\ast^n_{m_1, \dots, m_n}(f, g_1, \dots, g_n) := f \ast (g_1, \dots, g_n)$, where the operation $f \ast (g_1, \dots, g_n) \in \mathcal{O}_A^{(m)}$, $m = \sum_{i = 1}^n m_i$, is given by the rule
\begin{multline*}
\bigl( f \ast (g_1, \dotsc, g_n) \bigr) (a_{1,1}, \dotsc, a_{1,m_1}, a_{2,1}, \dotsc, a_{2,m_2}, \dotsc, a_{n,1}, \dotsc, a_{n,m_n}) = \\
f \bigl( g_1(a_{1,1}, \dotsc, a_{1,m_1}), g_2(a_{2,1}, \dotsc, a_{2,m_2}), \dotsc, g_n(a_{n,1}, \dotsc, a_{n,m_n}) \bigr),
\end{multline*}
for all $a_{i,j} \in A$, $1 \leq i \leq n$, $1 \leq j \leq m_i$. The first unary projection $x_1^{(1)}$ serves as the neutral element. The preclone $\bigl( (\mathcal{O}_A^{(n)})_{n \geq 1}; (\ast^n_{m_1, \dots, m_n})_{n \geq 1, m_1, \dots, m_n \geq 1},\, x_1^{(1)} \bigr)$ described above is called the \emph{full preclone of operations on $A$,} and its subalgebras are called \emph{preclones of operations on $A$.}

It is a well-known fact that every preclone is isomorphic to a preclone of operations on some set (see~\cite[Proposition 2.8]{EW} for a proof).

We conclude this subsection with a few examples of preclones. Further examples are provided in the paper by Ésik and Weil~\cite{EW}.

\begin{example}
\label{ex:clone}
The \emph{composition} of operation $f \in \mathcal{O}_A^{(n)}$ with $g_1, \dots, g_n \in \mathcal{O}_A^{(m)}$ is the operation $f \circ (g_1, \dots, g_n) \in \mathcal{O}_A^{(m)}$ given by the rule
\[
f \circ (g_1, \dots, g_n)(\mathbf{a}) := f \bigl( g_1(\mathbf{a}), \dots, g_n(\mathbf{a}) \bigr)
\quad
\text{for all $\mathbf{a} \in A^m$.}
\]
A \emph{clone} on $A$ is a set of operations on $A$ that is closed under composition and contains all the projections $x_i^{(n)}$ for all $n$ and $1 \leq i \leq n$. For general background on clones, see, e.g., \cite{DW,Lau,PK,Szendrei}.

Every clone on $A$ is (the universe of) a preclone of operations on $A$. For, let $\mathcal{C}$ be a clone on $A$. By definition, $\mathcal{C}$ contains the unary first projection. We want to verify that $\mathcal{C}$ is closed under superposition. Let $f \in \mathcal{C}^{(n)}$, $g_i \in \mathcal{C}^{(m_i)}$ ($1 \leq i \leq n$), $m := \sum_{i = 1}^n m_i$. Since $\mathcal{C}$ contains all projections and is closed under composition, $\mathcal{C}$ contains the $m$-ary operations
\[
g'_i := g_i \circ (x_{\hat{m}_{i-1} + 1}^{(m)}, x_{\hat{m}_{i-1} + 2}^{(m)}, \dots, x_{\hat{m}_i}^{(m)})
\qquad (1 \leq i \leq n),
\]
where $\hat{m}_i := \sum_{j = 1}^i m_i$ ($0 \leq i \leq n$), and we clearly have that
\[
f \ast (g_1, \dots, g_n) = f \circ (g'_1, \dots, g'_n),
\]
which is a member of $\mathcal{C}$ since $\mathcal{C}$ is closed under composition.
\end{example}

\begin{example}
Let $\mathcal{A} := (A; (f^\mathcal{A}_i)_{i \in I})$ be an algebra of type $\tau$. It is well-known that the set $W_\tau(X)^\mathcal{A}$ of term operations on $\mathcal{A}$ is a clone (see~\cite{DW,Lau,PK,Szendrei}) and hence it is a preclone by Example~\ref{ex:clone}. Consider the following subsets of $W_\tau(X)^\mathcal{A}$:
\begin{itemize}
\item the set $W^\mathrm{lin}_\tau(X)^\mathcal{A}$ of term operations on $\mathcal{A}$ induced by \emph{linear terms,} i.e., terms of type $\tau$ where no variable occurs more than once;
\item the set $W^\mathrm{inc}_\tau(X)^\mathcal{A}$ of term operations on $\mathcal{A}$ induced by linear terms where the variables occur in increasing order, i.e., if $t$ is a linear term of type $\tau$ and variables $x_i$ and $x_j$ occur in $t$ and the occurrence of $x_i$ in $t$ is to the left of that of $x_j$, then $i < j$.
\end{itemize}
It is easy to verify that $W^\mathrm{lin}_\tau(X)^\mathcal{A}$ and $W^\mathrm{inc}_\tau(X)^\mathcal{A}$ are preclones, but they are not in general clones.
\end{example}


\subsection{A Galois connection between operations and matrix collections}

A \emph{Galois connection} between sets $A$ and $B$ is a pair $(\sigma, \tau)$ of mappings $\sigma \colon \mathcal{P}(A) \to \mathcal{P}(B)$ and $\tau \colon \mathcal{P}(B) \to \mathcal{P}(A)$ between the power sets $\mathcal{P}(A)$ and $\mathcal{P}(B)$ such that for all $X, X' \subseteq A$ and all $Y, Y' \subseteq B$ the following conditions are satisfied:
\[
\begin{array}{c}
X \subseteq X' \Longrightarrow \sigma(X) \supseteq \sigma(X'), \\
Y \subseteq Y' \Longrightarrow \tau(Y) \supseteq \tau(Y'),
\end{array}
\quad\text{and}\quad
\begin{array}{c}
X \subseteq \tau(\sigma(X)), \\
Y \subseteq \sigma(\tau(Y)),
\end{array}
\]
or, equivalently,
\[
X \subseteq \tau(Y) \Longleftrightarrow \sigma(X) \supseteq Y.
\]

The most popular Galois connections are derived from binary relations, as the following well-known theorem shows (for early references, see~\cite{Everett,Ore}; see also~\cite{DEW,Lau}):
\begin{theorem}
\label{thm:Galois}
Let $A$ and $B$ be nonempty sets and let $R \subseteq A \times B$. Define the mappings $\sigma \colon \mathcal{P}(A) \to \mathcal{P}(B)$, $\tau \colon \mathcal{P}(B) \to \mathcal{P}(A)$ by
\begin{align*}
\sigma(X) &:= \{y \in B \mid \forall x \in X \colon (x, y) \in R\}, \\
\tau(Y) &:= \{x \in A \mid \forall y \in Y \colon (x, y) \in R\}.
\end{align*}
Then the pair $(\sigma, \tau)$ is a Galois connection between $A$ and $B$.
\end{theorem}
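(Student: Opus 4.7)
The plan is to verify the biconditional characterization $X \subseteq \tau(Y) \Longleftrightarrow \sigma(X) \supseteq Y$, since this is stated earlier in the text as equivalent to the conjunction of the four defining conditions of a Galois connection. This reduces the entire argument to a single routine unfolding of definitions, and I would prefer this route over checking the four conditions one by one.

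Concretely, I would unwind both sides of the biconditional in terms of $R$. By the definition of $\tau$, the inclusion $X \subseteq \tau(Y)$ is equivalent to the statement that every $x \in X$ lies in $\{x \in A \mid \forall y \in Y \colon (x, y) \in R\}$, i.e., to the quantified statement
\[
\forall x \in X \, \forall y \in Y \colon (x, y) \in R.
\]
Symmetrically, by the definition of $\sigma$, the inclusion $\sigma(X) \supseteq Y$ is equivalent to
\[
\forall y \in Y \, \forall x \in X \colon (x, y) \in R.
\]
These two statements coincide up to the harmless permutation of two universal quantifiers, so the biconditional is immediate.

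I do not expect any real obstacle here; the content of the theorem is purely formal. If one chose instead to verify the four separate conditions, the antitonicity of $\sigma$ would follow because enlarging $X$ only adds constraints to the defining condition for membership in $\sigma(X)$ and can thus only shrink it, with $\tau$ handled dually, while the extensivity inclusions $X \subseteq \tau(\sigma(X))$ and $Y \subseteq \sigma(\tau(Y))$ would follow directly by reading off the defining conditions. The only point worth a moment's care is the vacuous case $X = \emptyset$ or $Y = \emptyset$, where $\sigma(\emptyset) = B$ and $\tau(\emptyset) = A$, but this is consistent with the universally quantified definitions.
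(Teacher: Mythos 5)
Your proof is correct. The paper states Theorem~\ref{thm:Galois} without proof, citing it as well-known (with references to Everett, Ore, and the surveys \cite{DEW,Lau}), so there is no in-paper argument to compare against; your route through the single biconditional $X \subseteq \tau(Y) \Longleftrightarrow \sigma(X) \supseteq Y$, unfolding both sides to $\forall x \in X\, \forall y \in Y \colon (x,y) \in R$, is the standard one and is complete as written, including the vacuous-case remark.
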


For any nonnegative integers $m, n$, we denote by $A^{m \times n}$ the set of all matrices with $m$ rows and $n$ columns and entries from $A$. Subsets $\Gamma \subseteq \bigcup_{p \geq 0} A^{m \times p}$, for a fixed $m \geq 1$, are called \emph{matrix collections} on $A$, and the number $m$ is referred to as the \emph{arity} of $\Gamma$. For $m \geq 1$, we denote
\[
\mathcal{M}_A^{(m)} := \{\Gamma \mid \Gamma \subseteq \bigcup_{p \geq 0} A^{m \times p}\}
\quad\text{and}\quad
\mathcal{M}_A := \bigcup_{m \geq 1} \mathcal{M}_A^{(m)}.
\]
The \emph{breadth} of a matrix collection $\Gamma$ is the maximum number of columns of the matrices that are members of $\Gamma$, provided that this maximum exists; if there is no maximum, we say that $\Gamma$ has \emph{infinite breadth.} We also agree that the breadth of the empty matrix collection $\emptyset$ is $0$.

Let $f \in \mathcal{O}_A^{(n)}$, and let $\mathbf{M} := (a_{ij}) \in A^{m \times n}$. We denote by $f \mathbf{M}$ the $m$-tuple
\[
(f(a_{11}, a_{12}, \dotsc, a_{1n}), f(a_{21}, a_{22}, \dotsc, a_{2n}), \dotsc, f(a_{m1}, a_{m2}, \dotsc, a_{mn})),
\]
i.e., the $m$-tuple obtained by applying $f$ to the rows of $\mathbf{M}$. We will interpret $f \mathbf{M}$ as a column vector.
We say that an operation $f \in \mathcal{O}_A^{(n)}$ \emph{preserves} a matrix collection $\Gamma \in \mathcal{M}_A^{(m)}$, denoted $f \vartriangleright \Gamma$, if for all $m$-row matrices $\mathbf{M} := [\mathbf{M}_1 | \mathbf{M}_2 | \mathbf{M}_3]$, where $\mathbf{M}_2$ has $n$ columns, the condition $\mathbf{M} \in \Gamma$ implies $[\mathbf{M}_1 | f \mathbf{M}_2 | \mathbf{M}_3] \in \Gamma$.

Let $\mathcal{M} \subseteq \mathcal{M}_A$ be a set of matrix collections on $A$, and let $\mathcal{F} \subseteq \mathcal{O}_A$ be a set of operations on $A$. We say that $\mathcal{F}$ is \emph{characterized} by $\mathcal{M}$, if $\mathcal{F} = \{f \in \mathcal{O}_A \mid \forall \Gamma \in \mathcal{M} \colon f \vartriangleright \Gamma\}$, i.e., $\mathcal{F}$ is precisely the set of all operations that preserve all matrix collections in $\mathcal{M}$. Similarly, we say that $\mathcal{M}$ is \emph{characterized} by $\mathcal{F}$, if $\mathcal{M} = \{\Gamma \in \mathcal{M}_A \mid \forall f \in \mathcal{F} \colon f \vartriangleright \Gamma\}$, i.e., $\mathcal{M}$ is precisely the set of all matrix collections that are preserved by all operations in $\mathcal{F}$. In light of Theorem~\ref{thm:Galois}, the relation $\vartriangleright$ induces a Galois connection between $\mathcal{O}_A$ and $\mathcal{M}_A$, and its closed classes of operations (matrix collections) are exactly those which are characterized by matrix collections (operations, respectively).

In the remaining two sections, we will show that the closed classes of operations are precisely the locally closed preclones (Theorem~\ref{thm:preclones}), and we will describe (in Theorem~\ref{thm:bundles}) the closed classes of matrix collections as subsets of $\mathcal{M}_A$ that are closed under certain operations on matrix collections that will be defined in Section~\ref{sec:mc}


\section{Preclones are characterized by matrix collections}

In this section, we will show that the sets of operations on $A$ that are characterized by matrix collections are precisely the universes of preclones of operations on $A$ that are locally closed.

We say that a set $\mathcal{F}$ of operations on $A$ is \emph{locally closed} if for all $f \colon A^n \to A$ it holds that $f \in \mathcal{F}$ whenever for all finite subsets $S$ of $A^n$, there exists a $g \in \mathcal{F}$ such that $f|_S = g|_S$. (Note that every set of operations on a finite set is locally closed by definition.)

\begin{theorem}
\label{thm:preclones}
Let $\mathcal{F} \subseteq \mathcal{O}_A$ be a set of operations on $A$. The following are equivalent.
\begin{enumerate}[\rm (i)]
\item $\mathcal{F}$ is the universe of a preclone of operations on $A$ that is locally closed.
\item $\mathcal{F}$ is characterized by some set of matrix collections on $A$.
\end{enumerate}
\end{theorem}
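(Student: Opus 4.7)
The proof breaks cleanly into the Galois-closure direction $(\mathrm{ii})\Rightarrow(\mathrm{i})$ and the construction direction $(\mathrm{i})\Rightarrow(\mathrm{ii})$; the latter is where the real work lies.

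For $(\mathrm{ii})\Rightarrow(\mathrm{i})$, assuming $\mathcal{F}$ is characterized by some $\mathcal{M}\subseteq\mathcal{M}_A$, I would verify the preclone axioms and local closure directly. The neutral element $x_1^{(1)}$ preserves every matrix collection, as the middle block is a single column returned unchanged. Closure under preclone superposition reduces to a finite iteration: given $f,g_1,\dots,g_n\in\mathcal{F}$, one splits the middle block $\mathbf{N}_2$ of a matrix $[\mathbf{N}_1\mid\mathbf{N}_2\mid\mathbf{N}_3]\in\Gamma$ (of width $\sum m_i$) into $n$ consecutive blocks of widths $m_1,\dots,m_n$, contracts each with the corresponding $g_i$, and finally contracts the resulting $n$ columns with $f$, producing $(f\ast(g_1,\dots,g_n))\mathbf{N}_2$ while remaining inside $\Gamma$. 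Local closure comes from the finite set of rows of the middle block of any violating matrix: any $g$ of the same arity as $f$ that agrees with $f$ on those rows also fails to preserve $\Gamma$, so cannot lie in $\mathcal{F}$.

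For $(\mathrm{i})\Rightarrow(\mathrm{ii})$, the plan is to take $\mathcal{M}$ to be the set of all matrix collections preserved by every operation in $\mathcal{F}$ and to exhibit, for each $f\in\mathcal{O}_A^{(n)}\setminus\mathcal{F}$, some $\Gamma\in\mathcal{M}$ that $f$ fails to preserve. By local closure one may fix a finite $S=\{\mathbf{s}_1,\dots,\mathbf{s}_m\}\subseteq A^n$ with $g|_S\ne f|_S$ for every $g\in\mathcal{F}^{(n)}$; form the $m\times n$ matrix $\mathbf{M}$ with rows $\mathbf{s}_1,\dots,\mathbf{s}_m$; and let $\Gamma_\mathbf{M}$ be the smallest $\mathcal{F}$-preserved matrix collection containing $\mathbf{M}$, built by iterating $\mathcal{F}$-preservation replacements from $\{\mathbf{M}\}$.

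The crux is a structural lemma, proved by induction on this construction: every member of $\Gamma_\mathbf{M}$ has the form $[h_1\mathbf{M}_{[a_1,b_1]}\mid\cdots\mid h_p\mathbf{M}_{[a_p,b_p]}]$, where $\mathbf{M}_{[a,b]}$ denotes the sub-matrix of $\mathbf{M}$ on columns $a$ through $b$, each $h_i\in\mathcal{F}^{(b_i-a_i+1)}$, and the ranges $[a_1,b_1],\dots,[a_p,b_p]$ are consecutive, pairwise disjoint, and together cover $[1,n]$ exactly. The base case $\mathbf{M}=[\mathbf{1}\mathbf{M}_{[1,1]}\mid\cdots\mid\mathbf{1}\mathbf{M}_{[n,n]}]$ is immediate; the inductive step uses the defining formula for preclone superposition to identify the new column produced by applying $g\in\mathcal{F}^{(k)}$ to a consecutive block of $k$ columns as $\bigl(g\ast(h_{j+1},\dots,h_{j+k})\bigr)\mathbf{M}_{[a_{j+1},b_{j+k}]}$, with $g\ast(h_{j+1},\dots,h_{j+k})\in\mathcal{F}^{(b_{j+k}-a_{j+1}+1)}$ by closure of $\mathcal{F}$ under $\ast$, so the structural form persists. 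A one-column member of $\Gamma_\mathbf{M}$ must therefore have its unique range equal to $[1,n]$, i.e., be of the form $[h\mathbf{M}]$ with $h\in\mathcal{F}^{(n)}$; so if $[f\mathbf{M}]\in\Gamma_\mathbf{M}$ then $h\mathbf{M}=f\mathbf{M}$, yielding $h|_S=f|_S$ and contradicting the choice of $S$. Hence $[f\mathbf{M}]\notin\Gamma_\mathbf{M}$, and setting $\mathbf{N}_1=\mathbf{N}_3=\emptyset$, $\mathbf{N}_2=\mathbf{M}$ gives $f\not\vartriangleright\Gamma_\mathbf{M}$. The principal obstacle is this structural lemma: one has to notice that preclone superposition concatenates rather than identifies variable lists, which is exactly the property keeping matrices in $\Gamma_\mathbf{M}$ in the ``consecutive pairwise disjoint ranges covering $[1,n]$'' form, and then organize the induction accordingly.
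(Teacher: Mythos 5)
Your proof is correct and takes essentially the same route as the paper's. The only real difference is presentational: you define the separating matrix collection $\Gamma_{\mathbf{M}}$ as the smallest $\mathcal{F}$-preserved collection containing $\mathbf{M}$ and then prove by induction that every member has the block-superposed form $[h_1\mathbf{M}_{[a_1,b_1]}\mid\cdots\mid h_p\mathbf{M}_{[a_p,b_p]}]$, whereas the paper simply \emph{defines} $\Gamma$ to be the set of matrices of exactly that form and then verifies directly that it is closed under every $f\in\mathcal{F}$ (which is the same computation your inductive step carries out); the structural lemma you isolate is precisely that verification read in the opposite direction.
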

\begin{proof}
(ii) $\Rightarrow$ (i): Assume that $\mathcal{F}$ is characterized by a set $\mathcal{M} \subseteq \mathcal{M}_A$ of matrix collections. Let $\Gamma \in \mathcal{M}$. For all matrices $\mathbf{M} := [\mathbf{M}_1 | \mathbf{M}_2 | \mathbf{M}_3] \in \Gamma$ such that $\mathbf{M}_2$ has exactly one column, we have that $[\mathbf{M}_1 | x_1^{(1)} \mathbf{M}_2 | \mathbf{M}_3] = [\mathbf{M}_1 | \mathbf{M}_2 | \mathbf{M}_3] \in \Gamma$, and hence $x_1^{(1)} \vartriangleright \Gamma$. Therefore $x_1^{(1)}$ preserves every $\Gamma \in \mathcal{M}$, so $x_1^{(1)} \in \mathcal{F}$.

Let $f \in \mathcal{F}^{(n)}$, $g_i \in \mathcal{F}^{(m_i)}$ ($1 \leq i \leq n$), and let $m = \sum_{i=1}^n m_i$. We will show that $f \ast (g_1, \dotsc, g_n) \in \mathcal{F}$. Let $\Gamma \in \mathcal{M}$, and let $\mathbf{M} := [\mathbf{M}_1 | \mathbf{M}_2 | \mathbf{M}_3] \in \Gamma$ such that $\mathbf{M}_2$ has $m$ columns. Let $[\mathbf{M}_{2,1} | \mathbf{M}_{2,2} | \dotsb | \mathbf{M}_{2,n}] := \mathbf{M}_2$ such that $\mathbf{M}_{2,i}$ has $m_i$ columns ($1 \leq i \leq n$); thus $\mathbf{M} = [\mathbf{M}_1 | \mathbf{M}_{2,1} | \mathbf{M}_{2,2} | \dotsb | \mathbf{M}_{2,n} | \mathbf{M}_3]$. By our assumption that $g_i \vartriangleright \Gamma$ for $1 \leq i \leq n$, a simple inductive proof shows that $[\mathbf{M}_1 | g_1 \mathbf{M}_{2,1} | g_2 \mathbf{M}_{2,2} | \dotsb | g_n \mathbf{M}_{2,n} | \mathbf{M}_3] \in \Gamma$. Since $f \vartriangleright \Gamma$, we have that
\[
[\mathbf{M}_1 | f \ast (g_1, \dotsc, g_n) \mathbf{M}_2 | \mathbf{M}_3] =
[\mathbf{M}_1 | f [g_1 \mathbf{M}_{2,1} | \dotsb | g_n \mathbf{M}_{2,n}] | \mathbf{M}_3] \in \Gamma.
\]

It remains to show that $\mathcal{F}$ is locally closed. Suppose on the contrary that there is a $f \in \mathcal{O}_A \setminus \mathcal{F}$, say $n$-ary, such that for all finite subsets $F \subseteq A^n$ there exists a $g \in \mathcal{F}$ satisfying $g|_F = f|_F$. Since $f \notin \mathcal{F}$, there is a matrix collection $\Gamma \in \mathcal{M}$ and a matrix $[\mathbf{M}_1 | \mathbf{M}_2 | \mathbf{M}_3] \in \Gamma$ such that $[\mathbf{M}_1 | f \mathbf{M}_2 | \mathbf{M}_3] \notin \Gamma$. Let $F$ be the finite set of rows of $\mathbf{M}_2$. By our assumption, there exists a function $g \in \mathcal{F}$ such that $g|_F = f|_F$, and so we have that $g \mathbf{M}_2 = g|_F \mathbf{M}_2 = f|_F \mathbf{M}_2 = f \mathbf{M}_2$. Hence $[\mathbf{M}_1 | g \mathbf{M}_2 | \mathbf{M}_3] \notin \Gamma$, which contradicts the fact that $g \vartriangleright \Gamma$.

(ii) $\Rightarrow$ (i): Assume that $\mathcal{F}$ is a locally closed preclone. We will show that for each $g \notin \mathcal{F}$, there is a matrix collection $\Gamma$ such that $g \ntriangleright \Gamma$ but for every $f \in \mathcal{F}$, $f \vartriangleright \Gamma$. The set of all such ``separating'' matrix collections for every $g \notin \mathcal{F}$ characterizes $\mathcal{F}$.

Assume that $g \notin \mathcal{F}$ is $m$-ary. Since $\mathcal{F}$ is locally closed, there is a finite subset $S \subseteq A^m$ such that $g|_S \neq f|_S$ for every $m$-ary $f \in \mathcal{F}$. Clearly $S$ is nonempty. Let $\mathbf{M}_*$ be an $\card{S} \times m$ matrix whose rows are the elements of $S$ in some fixed order. Let
\[
\Gamma := \bigl\{[h_1 \mathbf{M}_1 | \dotsb | h_r \mathbf{M}_r] \bigm\vert \mathbf{M}_* = [\mathbf{M}_1 | \dotsb | \mathbf{M}_r],\, r \geq 1,\, h_1, \dotsc, h_r \in \mathcal{F} \bigr\},
\]
where the number of columns of each $\mathbf{M}_i$ equals the arity of $h_i$ ($1 \leq i \leq r$). (Note that $\Gamma$ contains the matrix $\mathbf{M}_*$, because $x_1^{(1)} \in \mathcal{F}$ by the assumption that $\mathcal{F}$ is the universe of a preclone.)

By the definition of $\mathbf{M}_*$, $g \mathbf{M}_* \neq f \mathbf{M}_*$ for every ($m$-ary) $f \in \mathcal{F}$, and hence $g \mathbf{M}_* \notin \Gamma$; thus $g \ntriangleright \Gamma$. We still need to show that $f \vartriangleright \Gamma$ for all $f \in \mathcal{F}$. Let $f \in \mathcal{F}$ be $n$-ary and $\mathbf{M} \in \Gamma$. Then there exist $r \geq 1$ and $h_1, \dotsc, h_r \in \mathcal{F}$ such that $\mathbf{M} = [h_1 \mathbf{M}_1 | \dotsb | h_r \mathbf{M}_r]$ where $[\mathbf{M}_1 | \dotsb | \mathbf{M}_r] = \mathbf{M}_*$. Let $[\mathbf{M}'_1 | \mathbf{M}'_2 | \mathbf{M}'_3] := \mathbf{M}$ where $\mathbf{M}'_2$ has $n$ columns. Then
\begin{align*}
&\!\!\![\mathbf{M}'_1 | f \mathbf{M}'_2 | \mathbf{M}'_3] \\
&= [h_1 \mathbf{M}_1 | \dotsb | h_p \mathbf{M}_p | f [h_{p+1} \mathbf{M}_{p+1} | \dotsb | h_{p+n} \mathbf{M}_{p+n}] | h_{p+n+1} \mathbf{M}_{p+n+1} | \dotsb | h_r \mathbf{M}_r] \\
&= [h_1 \mathbf{M}_1 | \dotsb | h_p \mathbf{M}_p | \\
& \qquad\qquad f \ast (h_{p+1}, \dotsc, h_{p+n}) [\mathbf{M}_{p+1} | \dotsb | \mathbf{M}_{p+n}] | h_{p+n+1} \mathbf{M}_{p+n+1} | \dotsb | h_r \mathbf{M}_r]
\end{align*}
for some $p \geq 0$. We have that $f \ast (h_{p+1}, \dotsc, h_{p+n}) \in \mathcal{F}$ by the assumption that $\mathcal{F}$ is the universe of a preclone, and hence the matrix in the last line of the displayed chain of equalities is in $\Gamma$ by the definition of $\Gamma$.
\end{proof}


\section{Closure conditions for matrix collections}
\label{sec:mc}

In this section, we will establish explicit closure conditions for sets of matrix collections that are characterized by sets of operations. We will introduce a number of operations on the set $\mathcal{M}_A$ of matrix collections on $A$, and we will show that the closed subsets of $\mathcal{M}_A$ are precisely the subsets that are closed under these operations. Our methods and proofs follow closely those employed in~\cite{Lclusters}, which in turn are adaptations of those by Couceiro and Foldes~\cite{CF}.

For maps $f \colon A \to B$ and $g \colon C \to D$, the composition $g \circ f$ is defined only if the codomain $B$ of $f$ coincides with the domain $C$ of $g$. Removing this restriction, the \emph{concatenation} of $f$ and $g$ is defined to be the map $gf \colon f^{-1}[B \cap C] \to D$ given by the rule $(gf)(a) := g(f(a))$ for all $a \in f^{-1}[B \cap C]$. Clearly, if $B = C$, then $gf = g \circ f$; thus functional composition is subsumed and extended by concatenation. Concatenation is associative, i.e., for any maps $f$, $g$, $h$, we have $h(gf) = (hg)f$.

For a family $(g_i)_{i \in I}$ of maps $g_i \colon A_i \to B_i$ such that $A_i \cap A_j = \emptyset$ whenever $i \neq j$, we define the (\emph{piecewise}) \emph{sum of the family $(g_i)_{i \in I}$} to be the map $\sum_{i \in I} g_i \colon \bigcup_{i \in I} A_i \to \bigcup_{i \in I} B_i$ whose restriction to each $A_i$ coincides with $g_i$. If $I$ is a two-element set, say $I := \{1, 2\}$, then we write $g_1 + g_2$. Clearly, this operation is associative and commutative.

Concatenation is distributive over summation, i.e., for any family $(g_i)_{i \in I}$ of maps on disjoint domains and any map $f$,
\[
\Bigl( \sum_{i \in I} g_i \Bigr) f = \sum_{i \in I} (g_i f)
\qquad \text{and} \qquad
f \Bigl( \sum_{i \in I} g_i \Bigr) = \sum_{i \in I} (f g_i).
\]
In particular, if $g_1$ and $g_2$ are maps with disjoint domains, then
\[
(g_1 + g_2) f = (g_1 f) + (g_2 f)
\qquad \text{and} \qquad
f(g_1 + g_2) = (f g_1) + (f g_2).
\]

Let $m$ and $n$ be positive integers (viewed as ordinals, i.e., $m := \{0, \dotsc, m - 1\}$). Let $h \colon n \to m \cup V$ where $V$ is an arbitrary set of symbols disjoint from the ordinals, called \emph{existentially quantified indeterminate indices,} or simply \emph{indeterminates,} and let $\sigma \colon V \to A$ be any map, called a \emph{Skolem map.} Then each $m$-tuple $\vect{a} \in A^m$, being a map $\vect{a} \colon m \to A$, gives rise to an $n$-tuple $(\vect{a} + \sigma) h =: (b_0, \dots, b_{n-1}) \in A^n$, where
\[
b_i :=
\begin{cases}
a_{h(i)}, & \text{if $h(i) \in \{0, 1, \dots, m - 1\}$,} \\
\sigma(h(i)), & \text{if $h(i) \in V$.}
\end{cases}
\]

Let $H = (h_j)_{j \in J}$ be a nonempty family of maps $h_j \colon n_j \to m \cup V$, where each $n_j$ is a positive integer. Then $H$ is called a \emph{minor formation scheme} with \emph{target} $m$, \emph{indeterminate set} $V$, and \emph{source family} $(n_j)_{j \in J}$. Let $(\Gamma_j)_{j \in J}$ be a family of matrix collections on $A$, each $\Gamma_j$ of arity $n_j$, and let $\Gamma$ be an $m$-ary matrix collection on $A$. We say that $\Gamma$ is a \emph{conjunctive minor} of the family $(\Gamma_j)_{j \in J}$ \emph{via} $H$, if, for every $m \times n$ matrix $\mathbf{M} := (\vect{a}_{*1}, \dotsc, \vect{a}_{*n}) \in A^{m \times n}$,
\[
\mathbf{M} \in \Gamma
\Longleftrightarrow
\bigl[
\exists \sigma_1, \dots, \sigma_n \in A^V \:
\forall j \in J \colon
\bigl( (\vect{a}_{*1} + \sigma_1) h_j, \dotsc, (\vect{a}_{*n} + \sigma_n) h_j \bigr) \in \Gamma_j
\bigr].
\]
In the case that the minor formation scheme $H := (h_j)_{j \in J}$ and the family $(\Gamma_j)_{j \in J}$ are indexed by a singleton $J := \{0\}$, a conjunctive minor $\Gamma$ of a family consisting of a single matrix collection $\Gamma_0$ is called a \emph{simple minor} of $\Gamma_0$.

The formation of conjunctive minors subsumes the formation of simple minors and the intersection of collections of matrices. Simple minors in turn subsume permutation of rows, projection, identification of rows, and addition of inessential rows, operations which can be defined for matrix collections in an analogous way as for generalized constraints (cf.~\cite{Hellerstein,Lclusters}).

\begin{lemma}
\label{lemma:conjmin}
Let $\Gamma$ be a conjunctive minor of a nonempty family $(\Gamma_j)_{j \in J}$ of matrix collections on $A$. If $f \colon A^n \to A$ preserves $\Gamma_j$ for all $j \in J$, then $f$ preserves $\Gamma$.
\end{lemma}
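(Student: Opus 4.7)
To prove the lemma, I unpack the definition of $f \vartriangleright \Gamma$ and verify the condition directly. Fix an $m$-row matrix $\mathbf{M} = [\mathbf{M}_1 | \mathbf{M}_2 | \mathbf{M}_3] \in \Gamma$ with $\mathbf{M}_2$ having $n$ columns. Write $N$ for the total number of columns and let $\mathbf{a}_{*1}, \dotsc, \mathbf{a}_{*N}$ be the columns, so that $\mathbf{M}_2 = [\mathbf{a}_{*,p+1} | \dotsb | \mathbf{a}_{*,p+n}]$ where $p$ is the number of columns of $\mathbf{M}_1$. By the definition of conjunctive minor, there exist Skolem maps $\sigma_1, \dotsc, \sigma_N \in A^V$ such that for every $j \in J$ the $n_j \times N$ matrix $\mathbf{N}_j$ whose $k$-th column equals $(\mathbf{a}_{*k} + \sigma_k) h_j$ belongs to $\Gamma_j$.

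Now I apply the hypothesis $f \vartriangleright \Gamma_j$ to $\mathbf{N}_j$, split into the blocks with $p$, $n$, $q$ columns respectively (where $q = N - p - n$). This yields that $[\mathbf{N}_{j,1} | f \mathbf{N}_{j,2} | \mathbf{N}_{j,3}] \in \Gamma_j$ for every $j \in J$. The goal is to recognize this as the witness required by the conjunctive minor definition applied to the matrix $[\mathbf{M}_1 | f \mathbf{M}_2 | \mathbf{M}_3]$ with a \emph{single} new column $f \mathbf{M}_2$ replacing the middle block.

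The crucial step is to produce a Skolem map $\sigma'$ for that new column which works uniformly for all $j \in J$. I set
\[
\sigma'(v) := f\bigl(\sigma_{p+1}(v), \dotsc, \sigma_{p+n}(v)\bigr) \qquad (v \in V),
\]
and keep the old Skolem maps $\sigma_1,\dotsc,\sigma_p,\sigma_{p+n+1},\dotsc,\sigma_N$ for the unchanged columns. A straightforward entry-by-entry check then confirms that, for each $j \in J$ and each $i \in n_j$, the $i$-th entry of $f \mathbf{N}_{j,2}$ equals $(f \mathbf{M}_2 + \sigma')(h_j(i))$: if $h_j(i) = l \in m$ both sides reduce to $f(a_{l,p+1},\dotsc,a_{l,p+n})$, and if $h_j(i) = v \in V$ they both reduce to $f(\sigma_{p+1}(v),\dotsc,\sigma_{p+n}(v))$ by the choice of $\sigma'$. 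Since the other columns are unchanged, this exhibits the witnesses required to conclude $[\mathbf{M}_1 | f \mathbf{M}_2 | \mathbf{M}_3] \in \Gamma$.

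The one point to watch is that $\sigma'$ must not depend on $j$; this is exactly what forces the definition above, and is the reason why $f$ preserving each individual $\Gamma_j$ suffices — the coherence of Skolem values across all $j \in J$ is automatic because $f$ acts componentwise on the values supplied by the original Skolem maps. No case analysis on the scheme $H$ is needed, and the argument does not depend on $J$ being finite.
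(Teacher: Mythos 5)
Your proof is correct and takes essentially the same approach as the paper: both produce the new Skolem map $\sigma'(v) = f(\sigma_{p+1}(v), \dotsc, \sigma_{p+n}(v))$ for the column $f\mathbf{M}_2$ and show it witnesses membership in $\Gamma$. The only cosmetic difference is that the paper establishes the key identity $(f\mathbf{M}_2 + \sigma')h_j = f\mathbf{M}_2^j$ by invoking the distributivity of concatenation over piecewise sums and associativity of concatenation, whereas you verify the same identity by a direct entry-by-entry case split on whether $h_j(i) \in m$ or $h_j(i) \in V$.
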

\begin{proof}
Let $\Gamma$ be an $m$-ary conjunctive minor of the family $(\Gamma_j)_{j \in J}$ via the scheme $H := (h_j)_{j \in J}$, $h_j \colon n_j \to m \cup V$. Let $\mathbf{M} := (\vect{a}_{*1}, \dotsc, \vect{a}_{*p})$ be an $m \times p$ matrix $(p \geq n)$ such that $\mathbf{M} \in \Gamma$ and denote $\mathbf{M}_1 := (\vect{a}_{*1}, \dotsc, \vect{a}_{*q})$, $\mathbf{M}_2 := (\vect{a}_{*(q+1)}, \dotsc, \vect{a}_{*(q+n)})$, $\mathbf{M}_3 := (\vect{a}_{*(q+n+1)}, \dotsc, \vect{a}_{*p})$, for some $0 \leq q \leq p-n$, so $\mathbf{M} = [\mathbf{M}_1 | \mathbf{M}_2 | \mathbf{M}_3]$. We need to prove that $[\mathbf{M}_1 | f \mathbf{M}_2 | \mathbf{M}_3] \in \Gamma$.

Since $\Gamma$ is a conjunctive minor of $(\Gamma_j)_{j \in J}$ via $H = (h_j)_{j \in J}$, there are Skolem maps $\sigma_i \colon V \to A$ ($1 \leq i \leq p$) such that for every $j \in J$, we have
\[
\bigl( (\vect{a}_{*1} + \sigma_1) h_j, \dotsc, (\vect{a}_{*p} + \sigma_p) h_j \bigr) \in \Gamma_j.
\]
For each $j \in J$, denote
\begin{align*}
\mathbf{M}_1^j &:= \bigl( (\vect{a}_{*1} + \sigma_1) h_j, \dotsc, (\vect{a}_{*q} + \sigma_q) h_j \bigr), \\
\mathbf{M}_2^j &:= \bigl( (\vect{a}_{*(q+1)} + \sigma_{q+1}) h_j, \dotsc, (\vect{a}_{*(q+n)} + \sigma_{q+n}) h_j \bigr), \\
\mathbf{M}_3^j &:= \bigl( (\vect{a}_{*(q+n+1)} + \sigma_{q+n+1}) h_j, \dotsc, (\vect{a}_{*p} + \sigma_p) h_j \bigr).
\end{align*}
Let $\sigma := f(\sigma_{q+1}, \dots, \sigma_{q+n})$. By the distributivity of concatenation over piecewise sum of mappings and by the associativity of concatenation, we have that, for each $j \in J$,
\[
\begin{split}
(f \mathbf{M}_2 + \sigma) h_j
&= \bigl( (f(\vect{a}_{*(q+1)}, \dotsc, \vect{a}_{*(q+n)}) + f(\sigma_{q+1}, \dotsc, \sigma_{q+n}) \bigr) h_j \\
&= \bigl( f(\vect{a}_{*(q+1)} + \sigma_{q+1}, \dotsc, \vect{a}_{*(q+n)} + \sigma_{q+n}) \bigr) h_j \\
&= f \bigl( (\vect{a}_{*(q+1)} + \sigma_{q+1}) h_j, \dotsc, (\vect{a}_{*(q+n)} + \sigma_{q+n}) h_j \bigr)
= f \mathbf{M}_2^j.
\end{split}
\]
Since $f$ is assumed to preserve $\Gamma_j$, we have that $[\mathbf{M}_1^j | f \mathbf{M}_2^j | \mathbf{M}_3^j] \in \Gamma_j$ for each $j \in J$. Since $\Gamma$ is a conjunctive minor of $(\Gamma_j)_{j \in J}$ via $H = (h_j)_{j \in J}$, this implies that $[\mathbf{M}_1 | f \mathbf{M}_2 | \mathbf{M}_3] \in \Gamma$. Thus, $f \vartriangleright \Gamma$.
\end{proof}

\begin{lemma}
\label{lemma:union}
Let $(\Gamma_j)_{j \in J}$ be a nonempty family of $m$-ary matrix collections on $A$. If $f \colon A^n \to A$ preserves $\Gamma_j$ for all $j \in J$, then $f$ preserves $\bigcup_{j \in J} \Gamma_j$.
\end{lemma}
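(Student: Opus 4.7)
The plan is to unwind the definition of preservation directly and use the fact that membership in a union factors through membership in one of the sets. This is essentially a one-step argument, much simpler than the proof of Lemma \ref{lemma:conjmin}, because no minor formation scheme or Skolem maps are involved; union is a purely set-theoretic operation and the preservation relation is defined pointwise in terms of matrices.

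Concretely, I would fix an arbitrary matrix $\mathbf{M} = [\mathbf{M}_1 \mid \mathbf{M}_2 \mid \mathbf{M}_3] \in \bigcup_{j \in J} \Gamma_j$ with $\mathbf{M}_2$ having exactly $n$ columns, and the goal is to show $[\mathbf{M}_1 \mid f \mathbf{M}_2 \mid \mathbf{M}_3] \in \bigcup_{j \in J} \Gamma_j$. By the definition of union, there exists some $j_0 \in J$ such that $\mathbf{M} \in \Gamma_{j_0}$. Since $f \vartriangleright \Gamma_{j_0}$ by hypothesis, the defining condition of preservation gives $[\mathbf{M}_1 \mid f \mathbf{M}_2 \mid \mathbf{M}_3] \in \Gamma_{j_0}$, and this membership is preserved on passing to the larger set $\bigcup_{j \in J} \Gamma_j$.

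There is really no obstacle here: the argument is a direct verification, and the nonemptiness of $J$ is only used implicitly to ensure $\bigcup_{j \in J} \Gamma_j$ is a well-defined matrix collection of arity $m$ (so that preservation by $f$ is even meaningful in the intended way). I would phrase the proof in one short paragraph and would not invoke Lemma \ref{lemma:conjmin}, since using the heavy machinery of conjunctive minors to encode a union would be overkill for such an elementary statement.
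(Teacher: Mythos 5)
Your argument is correct and is essentially the paper's own proof: pick a witness index for the union, apply preservation in that single $\Gamma_{j_0}$, and pass back to the union. Nothing to add.
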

\begin{proof}
Let $\mathbf{M} = [\mathbf{M}_1 | \mathbf{M}_2 | \mathbf{M}_3] \in \bigcup_{j \in J} \Gamma_j$ be such that $\mathbf{M}_2$ has $n$ columns. Then there is an $i \in J$ such that $\mathbf{M} \in \Gamma_i$. By the assumption that $f \vartriangleright \Gamma_i$, we have that $[\mathbf{M}_1 | f \mathbf{M}_2 | \mathbf{M}_3] \in \Gamma_i$, and hence $[\mathbf{M}_1 | f \mathbf{M}_2 | \mathbf{M}_3] \in \bigcup_{j \in J} \Gamma_j$.
\end{proof}

The \emph{right quotient} of an $m$-ary matrix collection $\Gamma$ on $A$ by an $m \times n$ matrix $\mathbf{N}$ is defined by
\[
\Gamma / \mathbf{N} := \{\mathbf{M} \mid [\mathbf{M} | \mathbf{N}] \in \Gamma\}.
\]
The \emph{left quotient} of $\Gamma$ by $\mathbf{N}$ is defined similarly:
\[
\mathbf{N} \backslash \Gamma := \{\mathbf{M} \mid [\mathbf{N} | \mathbf{M}] \in \Gamma\}.
\]

\begin{lemma}
\label{lemma:quotprop}
Let $\Gamma, \Gamma' \in \mathcal{M}_A^{(m)}$ and $\mathbf{N} \in A^{m \times n}$, $\mathbf{N}' \in A^{m \times n'}$.
\begin{compactenum}[(i)]
\item $\mathbf{M} \in \Gamma / \mathbf{N}$ if and only if $[\mathbf{M} | \mathbf{N}] \in \Gamma$.
\item $\mathbf{M} \in \mathbf{N} \backslash \Gamma$ if and only if $[\mathbf{N} | \mathbf{M}] \in \Gamma$.
\item $(\mathbf{N} \backslash \Gamma) / \mathbf{N}' = \mathbf{N} \backslash (\Gamma / \mathbf{N}')$.
\item $\mathbf{N} \backslash (\Gamma \cup \Gamma') / \mathbf{N}' = (\mathbf{N} \backslash \Gamma / \mathbf{N}') \cup (\mathbf{N} \backslash \Gamma' / \mathbf{N}')$.
\end{compactenum}
\end{lemma}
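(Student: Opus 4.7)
The entire lemma will follow by unwinding the definitions of right and left quotient, so no ingenuity is required; the task is simply to chain equivalences cleanly.

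For parts (i) and (ii), there is nothing to do beyond restating the definitions: $\mathbf{M} \in \Gamma / \mathbf{N}$ is by definition the statement $[\mathbf{M} \mid \mathbf{N}] \in \Gamma$, and similarly $\mathbf{M} \in \mathbf{N} \backslash \Gamma$ is by definition $[\mathbf{N} \mid \mathbf{M}] \in \Gamma$. I would dispatch both in a single sentence, noting that they are immediate from the definitions given above the lemma.

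For (iii), the plan is to apply (i) and (ii) in alternation. Given an $m \times k$ matrix $\mathbf{M}$, I would write the chain
\[
\mathbf{M} \in (\mathbf{N} \backslash \Gamma) / \mathbf{N}'
\iff [\mathbf{M} \mid \mathbf{N}'] \in \mathbf{N} \backslash \Gamma
\iff [\mathbf{N} \mid \mathbf{M} \mid \mathbf{N}'] \in \Gamma
\iff [\mathbf{N} \mid \mathbf{M}] \in \Gamma / \mathbf{N}'
\iff \mathbf{M} \in \mathbf{N} \backslash (\Gamma / \mathbf{N}'),
\]
where the first and last steps use (i) and (ii) respectively, and the middle step uses the other of the two. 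Since this holds for every matrix $\mathbf{M}$ of the appropriate shape, the two matrix collections are equal. This also shows that the bracketed expression $\mathbf{N} \backslash \Gamma / \mathbf{N}'$ appearing in (iv) is unambiguous.

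For (iv), the plan is the same — reduce to a statement about membership in $\Gamma \cup \Gamma'$ and then split the disjunction. For any $\mathbf{M}$ of the right shape,
\[
\mathbf{M} \in \mathbf{N} \backslash (\Gamma \cup \Gamma') / \mathbf{N}'
\iff [\mathbf{N} \mid \mathbf{M} \mid \mathbf{N}'] \in \Gamma \cup \Gamma'
\iff [\mathbf{N} \mid \mathbf{M} \mid \mathbf{N}'] \in \Gamma \text{ or } [\mathbf{N} \mid \mathbf{M} \mid \mathbf{N}'] \in \Gamma',
\]
and each of the latter two conditions is, by (iii) and parts (i), (ii), equivalent to $\mathbf{M}$ lying in $\mathbf{N} \backslash \Gamma / \mathbf{N}'$ or $\mathbf{N} \backslash \Gamma' / \mathbf{N}'$, respectively. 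There is no real obstacle here; the only mild subtlety is ensuring the column counts of $\mathbf{N}$, $\mathbf{M}$, $\mathbf{N}'$ match in each assertion, which I would state once at the start of the proof and then use implicitly.
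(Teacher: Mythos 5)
Your proof is correct and follows essentially the same route as the paper: parts (i) and (ii) are dispatched as definitional, and parts (iii) and (iv) are proved by chaining the membership equivalences from (i) and (ii). (Incidentally, your final step in the chain for (iii), namely $\mathbf{M} \in \mathbf{N} \backslash (\Gamma / \mathbf{N}')$, is stated more cleanly than the paper's, which has a small typographical slip there.)
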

\begin{proof}
(i), (ii): Immediate from the definition of right and left quotients.

(iii): By parts (i) and (ii), we have
\begin{multline*}
\mathbf{M} \in (\mathbf{N} \backslash \Gamma) / \mathbf{N}'
\Longleftrightarrow
[\mathbf{M} | \mathbf{N}'] \in \mathbf{N} \backslash \Gamma
\Longleftrightarrow
[\mathbf{N} | \mathbf{M} | \mathbf{N}'] \in \Gamma \\
\Longleftrightarrow
[\mathbf{N} | \mathbf{M}] \in \Gamma / \mathbf{N}'
\Longleftrightarrow
[\mathbf{N} | \mathbf{M} | \mathbf{N}'] \in \mathbf{N} \backslash (\Gamma / \mathbf{N}'),
\end{multline*}
and the claim follows.

(iv): By parts (i) and (ii) and by the definition of union, we have
\begin{align*}
\mathbf{M} \in \mathbf{N} \backslash (\Gamma \cup \Gamma') / \mathbf{N}'
&\Longleftrightarrow
[\mathbf{N} | \mathbf{M} | \mathbf{N}'] \in \Gamma \cup \Gamma' \\
&\Longleftrightarrow
[\mathbf{N} | \mathbf{M} | \mathbf{N}'] \in \Gamma \vee [\mathbf{N} | \mathbf{M} | \mathbf{N}'] \in \Gamma' \\
&\Longleftrightarrow
\mathbf{M} \in \mathbf{N} \backslash \Gamma / \mathbf{N}' \vee \mathbf{M} \in \mathbf{N} \backslash \Gamma' / \mathbf{N}' \\
&\Longleftrightarrow
\mathbf{M} \in (\mathbf{N} \backslash \Gamma / \mathbf{N}') \cup (\mathbf{N} \backslash \Gamma' / \mathbf{N}'),
\end{align*}
and the claim follows.
\end{proof}

\begin{remark}
By Lemma~\ref{lemma:quotprop}, $(\mathbf{N} \backslash \Gamma) / \mathbf{N}' = \mathbf{N} \backslash (\Gamma / \mathbf{N}')$ and hence we can write $\mathbf{N} \backslash \Gamma / \mathbf{N}'$ without ambiguity. By parts (i) and (ii), we also have that $\mathbf{M} \in \mathbf{N} \backslash \Gamma / \mathbf{N}'$ if and only if $[\mathbf{N} | \mathbf{M} | \mathbf{N}'] \in \Gamma$.
\end{remark}

\begin{lemma}
\label{lemma:quotient}
Let $\Gamma$ be an $m$-ary matrix collection on $A$. If $f$ preserves $\Gamma$, then $f$ preserves $\mathbf{N} \backslash \Gamma$ and $\Gamma / \mathbf{N}$ for all $m$-row matrices $\mathbf{N}$.
\end{lemma}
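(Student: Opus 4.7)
The plan is to unfold the definition of preservation and push the quotient through. Concretely, to show $f \vartriangleright (\mathbf{N} \backslash \Gamma)$, I would pick an arbitrary $\mathbf{M} \in \mathbf{N} \backslash \Gamma$ decomposed as $[\mathbf{M}_1 | \mathbf{M}_2 | \mathbf{M}_3]$ with $\mathbf{M}_2$ having $n$ columns, and aim to show $[\mathbf{M}_1 | f\mathbf{M}_2 | \mathbf{M}_3] \in \mathbf{N} \backslash \Gamma$.

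By Lemma~\ref{lemma:quotprop}(ii), the hypothesis $\mathbf{M} \in \mathbf{N} \backslash \Gamma$ translates into $[\mathbf{N} | \mathbf{M}_1 | \mathbf{M}_2 | \mathbf{M}_3] \in \Gamma$. Now I would re-bracket this as $[\mathbf{N}' | \mathbf{M}_2 | \mathbf{M}_3]$ where $\mathbf{N}' := [\mathbf{N} | \mathbf{M}_1]$; since $\Gamma$ is preserved by $f$ and $\mathbf{M}_2$ has $n$ columns, the preservation property yields $[\mathbf{N} | \mathbf{M}_1 | f\mathbf{M}_2 | \mathbf{M}_3] \in \Gamma$. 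Applying Lemma~\ref{lemma:quotprop}(ii) in the other direction gives $[\mathbf{M}_1 | f\mathbf{M}_2 | \mathbf{M}_3] \in \mathbf{N} \backslash \Gamma$, as required.

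The argument for $f \vartriangleright (\Gamma / \mathbf{N})$ is symmetric: start with $\mathbf{M} = [\mathbf{M}_1 | \mathbf{M}_2 | \mathbf{M}_3] \in \Gamma / \mathbf{N}$, use Lemma~\ref{lemma:quotprop}(i) to get $[\mathbf{M}_1 | \mathbf{M}_2 | \mathbf{M}_3 | \mathbf{N}] \in \Gamma$, re-bracket the right block as $[\mathbf{M}_3 | \mathbf{N}]$, invoke $f \vartriangleright \Gamma$ on the middle block $\mathbf{M}_2$, and then strip off $\mathbf{N}$ on the right using Lemma~\ref{lemma:quotprop}(i) again.

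There is no real obstacle here: the whole proof is definition-chasing, and the only thing to be careful about is keeping the block decomposition straight so that $\mathbf{M}_2$ (the block on which $f$ acts) stays strictly between the other blocks, in particular disjoint from the appended matrix $\mathbf{N}$. The argument would even work if $\mathbf{N}$ has zero columns, since in that case the quotients are just $\Gamma$ itself.
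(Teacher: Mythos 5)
Your proof is correct and follows essentially the same approach as the paper: translate membership in the quotient to membership in $\Gamma$ via Lemma~\ref{lemma:quotprop}, apply the preservation property of $f$ with the appended matrix absorbed into the outer blocks, and translate back. The paper leaves the re-bracketing of $[\mathbf{N} \mid \mathbf{M}_1]$ (or $[\mathbf{M}_3 \mid \mathbf{N}]$) implicit, but you are making the same move.
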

\begin{proof}
Assume that $f \in \mathcal{O}_A^{(n)}$ preserves $\Gamma$. Let $[\mathbf{M}_1 | \mathbf{M}_2 | \mathbf{M}_3] \in \mathbf{N} \backslash \Gamma$ such that $\mathbf{M}_2$ has $n$ columns. By Lemma~\ref{lemma:quotprop}, $[\mathbf{N} | \mathbf{M}_1 | \mathbf{M}_2 | \mathbf{M}_3] \in \Gamma$, and by the assumption that $f \vartriangleright \Gamma$ we have that $[\mathbf{N} | \mathbf{M}_1 | f \mathbf{M}_2 | \mathbf{M}_3] \in \Gamma$. Again, by Lemma~\ref{lemma:quotprop}, $[\mathbf{M}_1 | f \mathbf{M}_2 | \mathbf{M}_3] \in \mathbf{N} \backslash \Gamma$, and we conclude that $f \vartriangleright \mathbf{N} \backslash \Gamma$. The statement $f \vartriangleright \Gamma / \mathbf{N}$ is proved similarly.
\end{proof}

\begin{lemma}
\label{lemma:dividend}
Assume that $\Gamma \in \mathcal{M}_A^{(m)}$ contains all $m$-row matrices on $A$ with at most $p$ columns, for some $p \geq 0$. If $f$ preserves $\mathbf{N}_1 \backslash \Gamma / \mathbf{N}_2$ for all matrices $\mathbf{N}_1$, $\mathbf{N}_2$ such that $[\mathbf{N}_1 | \mathbf{N}_2]$ has at least $p$ columns, then $f$ preserves $\Gamma$.
\end{lemma}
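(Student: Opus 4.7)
My plan is to take an arbitrary matrix $\mathbf{M} = [\mathbf{M}_1 | \mathbf{M}_2 | \mathbf{M}_3] \in \Gamma$ with $\mathbf{M}_2$ having $n$ columns and show $[\mathbf{M}_1 | f \mathbf{M}_2 | \mathbf{M}_3] \in \Gamma$, splitting into two cases according to the total number of columns in the ``outer'' part $[\mathbf{M}_1 | \mathbf{M}_3]$. Let $q$ denote the number of columns of $[\mathbf{M}_1 | \mathbf{M}_3]$.

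If $q < p$, then $[\mathbf{M}_1 | f \mathbf{M}_2 | \mathbf{M}_3]$ is an $m$-row matrix with $q + 1 \leq p$ columns, so it lies in $\Gamma$ by the hypothesis that $\Gamma$ contains every $m$-row matrix with at most $p$ columns. This case requires no use of the quotient hypothesis.

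If $q \geq p$, then the pair $\mathbf{N}_1 := \mathbf{M}_1$, $\mathbf{N}_2 := \mathbf{M}_3$ satisfies the hypothesis that $[\mathbf{N}_1 | \mathbf{N}_2]$ has at least $p$ columns, so by assumption $f$ preserves $\mathbf{M}_1 \backslash \Gamma / \mathbf{M}_3$. From $[\mathbf{M}_1 | \mathbf{M}_2 | \mathbf{M}_3] \in \Gamma$ and Lemma~\ref{lemma:quotprop} (together with the remark that $\mathbf{M} \in \mathbf{N} \backslash \Gamma / \mathbf{N}'$ iff $[\mathbf{N} | \mathbf{M} | \mathbf{N}'] \in \Gamma$), we have $\mathbf{M}_2 \in \mathbf{M}_1 \backslash \Gamma / \mathbf{M}_3$. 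Viewing $\mathbf{M}_2$ as the trivial decomposition with empty left and right blocks and its $n$ columns as the middle, the preservation condition $f \vartriangleright \mathbf{M}_1 \backslash \Gamma / \mathbf{M}_3$ gives $f\mathbf{M}_2 \in \mathbf{M}_1 \backslash \Gamma / \mathbf{M}_3$, which by Lemma~\ref{lemma:quotprop} again is exactly $[\mathbf{M}_1 | f \mathbf{M}_2 | \mathbf{M}_3] \in \Gamma$.

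The argument is fundamentally a bookkeeping one; there is no real obstacle. The only subtlety worth flagging is the role played by the assumption that $\Gamma$ contains all $m$-row matrices of width at most $p$: it is exactly what is needed to cover the ``short outer context'' case $q < p$, for which no quotient of the required form is available. Once that boundary case is handled separately, the remaining case reduces to a one-line application of the quotient hypothesis via Lemma~\ref{lemma:quotprop}.
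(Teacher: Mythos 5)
Your proof is correct and follows essentially the same argument as the paper: split on whether the outer context $[\mathbf{M}_1\,|\,\mathbf{M}_3]$ has fewer than $p$ columns (handled directly by the hypothesis that $\Gamma$ contains all short matrices) or at least $p$ columns (handled by applying the preservation hypothesis to the quotient $\mathbf{M}_1\backslash\Gamma/\mathbf{M}_3$ via Lemma~\ref{lemma:quotprop}). The only difference from the paper is cosmetic, and incidentally your version avoids a small typo in the paper's proof, which writes ``$n_1+n_3\geq 3$'' where ``$n_1+n_3\geq p$'' is meant.
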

\begin{proof}
Assume that $f \in \mathcal{O}_A^{(n)}$ satisfies the hypotheses of the lemma. Let $\mathbf{M} := [\mathbf{M}_1 | \mathbf{M}_2 | \mathbf{M}_3] \in \Gamma$ where $\mathbf{M}_i$ has $n_i$ columns ($i = 1, 2, 3$) and $n_2 = n$. If $n_1 + n_3 < p$, then $[\mathbf{M}_1 | f \mathbf{M}_2 | \mathbf{M}_3]$ has $n_1 + n_3 + 1 \leq p$ columns and is obviously a member of $\Gamma$. We can thus assume that $n_1 + n_3 \geq 3$. By Lemma~\ref{lemma:quotprop}, $\mathbf{M}_2 \in \mathbf{M}_1 \backslash \Gamma / \mathbf{M}_3$, and so $f \mathbf{M}_2 \in \mathbf{M}_1 \backslash \Gamma / \mathbf{M}_3$ by our assumptions. Using Lemma~\ref{lemma:quotprop} again, we conclude that $[\mathbf{M}_1 | f \mathbf{M}_2 | \mathbf{M}_3] \in \Gamma$.
\end{proof}

For $p \geq 0$, the $m$-ary \emph{trivial matrix collection of breadth $p$,} denoted $\Omega_m^{(p)}$, is the set of all $m$-row matrices on $A$ with at most $p$ columns. The \emph{empty matrix collection} (of any arity) is the empty set $\emptyset$. Note that $\Omega_m^{(0)} \neq \emptyset$, because the empty matrix is the unique member of $\Omega_m^{(0)}$. The binary \emph{equality matrix collection,} denoted $E_2$, is the set of all two-row matrices with any finite number of columns such that the two rows are identical.

For $p \geq 0$, we say that the matrix collection $\Gamma^{(p)} := \Gamma \cap \Omega_m^{(p)}$ is obtained from the $m$-ary matrix collection $\Gamma$ by \emph{restricting the breadth to $p$.}

\begin{lemma}
\label{lemma:breadth}
Let $\Gamma$ be an $m$-ary matrix collection on $A$. Then $f$ preserves $\Gamma$ if and only if $f$ preserves $\Gamma^{(p)}$ for all $p \geq 0$.
\end{lemma}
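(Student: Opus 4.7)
The plan is to prove both directions directly from the definition of preservation, exploiting the observation that replacing the middle block $\mathbf{M}_2$ of $n$ columns by the single column $f\mathbf{M}_2$ never increases the column count (since $n \geq 1$).

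For the forward implication, I would fix $p \geq 0$, take a matrix $\mathbf{M} = [\mathbf{M}_1 | \mathbf{M}_2 | \mathbf{M}_3] \in \Gamma^{(p)}$ with $\mathbf{M}_2$ having $n$ columns, and note that $\mathbf{M} \in \Gamma$. Preservation of $\Gamma$ by $f$ then yields $[\mathbf{M}_1 | f\mathbf{M}_2 | \mathbf{M}_3] \in \Gamma$. Since this matrix has one column in place of $n$, its total column count is at most that of $\mathbf{M}$, hence at most $p$, so it lies in $\Omega_m^{(p)}$ and therefore in $\Gamma^{(p)} = \Gamma \cap \Omega_m^{(p)}$.

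For the backward implication, I would take any $\mathbf{M} = [\mathbf{M}_1 | \mathbf{M}_2 | \mathbf{M}_3] \in \Gamma$ with $\mathbf{M}_2$ of arity $n$, let $p$ be the number of columns of $\mathbf{M}$, and observe that $\mathbf{M} \in \Gamma^{(p)}$ trivially. The hypothesis $f \vartriangleright \Gamma^{(p)}$ gives $[\mathbf{M}_1 | f\mathbf{M}_2 | \mathbf{M}_3] \in \Gamma^{(p)} \subseteq \Gamma$, as required.

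There is no real obstacle here; the only point to watch is the bookkeeping on column counts in the forward direction, which ensures the transformed matrix stays within the breadth bound. The argument does not use any of the auxiliary closure operations and is essentially a direct unfolding of definitions.
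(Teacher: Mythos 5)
Your proof is correct and follows essentially the same approach as the paper: both directions unfold the definition of preservation, using $\Gamma^{(p)} \subseteq \Gamma$ together with the observation that replacing an $n$-column block ($n \geq 1$) by a single column cannot increase the total column count. The bookkeeping you flag on column counts in the forward direction is exactly the point the paper also relies on.
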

\begin{proof}
Assume first that $f \vartriangleright \Gamma$. Let $[\mathbf{M}_1 | \mathbf{M}_2 | \mathbf{M}_3] \in \Gamma^{(p)}$ for some $p \geq 0$. Since $\Gamma^{(p)} \subseteq \Gamma$, we have that $[\mathbf{M}_1 | \mathbf{M}_2 | \mathbf{M}_3] \in \Gamma$ and hence $[\mathbf{M}_1 | f \mathbf{M}_2 | \mathbf{M}_3] \in \Gamma$ by our assumption. The number of columns in $[\mathbf{M}_1 | f \mathbf{M}_2 | \mathbf{M}_3]$ is at most $p$, so we have that $[\mathbf{M}_1 | f \mathbf{M}_2 | \mathbf{M}_3] \in \Gamma^{(p)}$. Thus $f \vartriangleright \Gamma^{(p)}$ for all $p \geq 0$.

Assume then that $f \vartriangleright \Gamma^{(p)}$ for all $p \geq 0$. Let $\mathbf{M} := [\mathbf{M}_1 | \mathbf{M}_2 | \mathbf{M}_3] \in \Gamma$, and let $q$ be the number of columns in $\mathbf{M}$. Then $\mathbf{M} \in \Gamma^{(q)}$ and hence $[\mathbf{M}_1 | f \mathbf{M}_2 | \mathbf{M}_3] \in \Gamma^{(q)}$ by our assumption. Since $\Gamma^{(q)} \subseteq \Gamma$, we have that $[\mathbf{M}_1 | f \mathbf{M}_2 | \mathbf{M}_3] \in \Gamma$, and we conclude that $f \vartriangleright \Gamma$.
\end{proof}

We say that a set $\mathcal{M} \subseteq \mathcal{M}_A$ of matrix collections is \emph{closed under quotients,} if for any $\Gamma \in \mathcal{M}$, every left and right quotient $\mathbf{N} \backslash \Gamma$ and $\Gamma / \mathbf{N}$ is also in $\mathcal{M}$. We say that $\mathcal{M}$ is \emph{closed under dividends,} if for every $\Gamma \in \mathcal{M}_A$, say of arity $m$, it holds that $\Gamma \in \mathcal{M}$ whenever there is an integer $p \geq 0$ such that $\Omega_m^{(p)} \subseteq \Gamma$ and $\mathbf{N}_1 \backslash \Gamma / \mathbf{N}_2 \in \mathcal{M}$ for all $m$-row matrices $[\mathbf{N}_1 | \mathbf{N}_2]$ with at least $p$ columns. We say that $\mathcal{M}$ is \emph{locally closed,} if $\Gamma \in \mathcal{M}$ whenever $\Gamma^{(p)} \in \mathcal{M}$ for all $p \geq 0$. We say that $\mathcal{M}$ is \emph{closed under unions,} if $\bigcup_{j \in J} \Gamma_j \in \mathcal{M}$ whenever $(\Gamma_j)_{j \in J}$ is a family of $m$-ary matrix collections from $\mathcal{M}$. We say that $\mathcal{M}$ is \emph{closed under formation of conjunctive minors,} if all conjunctive minors of nonempty families of members of $\mathcal{M}$ are members of $\mathcal{M}$.

\begin{theorem}
\label{thm:bundles}
Let $A$ be an arbitrary nonempty set. For any set $\mathcal{M}$ of matrix collections on $A$, the following two conditions are equivalent:
\begin{compactenum}[(i)]
\item $\mathcal{M}$ is locally closed and contains the binary equality matrix collection, the unary empty matrix collection, and all unary trivial matrix collections of breadth $p \geq 0$, and it is closed under formation of conjunctive minors, unions, quotients, and dividends.
\item $\mathcal{M}$ is characterized by some set of operations on $A$.
\end{compactenum}
\end{theorem}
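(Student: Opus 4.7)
The plan is to prove the two implications separately. The direction (ii) $\Rightarrow$ (i) is a routine verification that reduces to the earlier lemmas of this section, whereas the reverse implication is where the substance lies.

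\textbf{Direction (ii) $\Rightarrow$ (i).} Assume $\mathcal{M}$ is characterized by a set $\mathcal{F} \subseteq \mathcal{O}_A$. Each closure condition matches an earlier lemma applied to every $f \in \mathcal{F}$: closure under conjunctive minors follows from Lemma~\ref{lemma:conjmin}; closure under unions from Lemma~\ref{lemma:union}; closure under quotients from Lemma~\ref{lemma:quotient}; closure under dividends from Lemma~\ref{lemma:dividend}; local closure from Lemma~\ref{lemma:breadth}. The membership of $E_2$, of the unary empty matrix collection, and of the unary trivial matrix collections $\Omega_1^{(p)}$ is immediate from the definition of $\vartriangleright$, since every operation preserves these.

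\textbf{Direction (i) $\Rightarrow$ (ii).} Set $\mathcal{F} := \{f \in \mathcal{O}_A : \forall \Gamma \in \mathcal{M},\ f \vartriangleright \Gamma\}$. The inclusion $\mathcal{M} \subseteq \{\Gamma : \forall f \in \mathcal{F},\ f \vartriangleright \Gamma\}$ is immediate. For the reverse inclusion, I would show that for every $\Gamma_0 \notin \mathcal{M}$ there exists a separating operation $f \in \mathcal{F}$ with $f \ntriangleright \Gamma_0$. First, use local closure of $\mathcal{M}$ and Lemma~\ref{lemma:breadth} to reduce to the case in which $\Gamma_0$ has finite breadth $\leq p$. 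Second, derive from the hypotheses that $\mathcal{M}$ contains all trivial matrix collections $\Omega_m^{(q)}$ of arbitrary arity $m$ and breadth $q$, and all empty matrix collections of arbitrary arity: the former is the $m$-ary conjunctive minor of $m$ copies of $\Omega_1^{(q)}$ via the scheme $h_i \colon 1 \to m$, $h_i(0) := i$, while the empty collections arise as conjunctive minors involving the unary empty collection.

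Third, exploit the dividend closure contrapositively. For $\Gamma_0$ of arity $m$ and breadth $\leq p$, any quotient $\mathbf{N}_1 \backslash \Gamma_0 / \mathbf{N}_2$ with $[\mathbf{N}_1 | \mathbf{N}_2]$ having at least $p$ columns is forced to be either $\Omega_m^{(0)}$ or the empty collection, both already in $\mathcal{M}$. Hence by dividend closure, if $\Omega_m^{(p)} \subseteq \Gamma_0$ we would conclude $\Gamma_0 \in \mathcal{M}$; this contradicts our assumption, so there exists a witness matrix $\mathbf{M}_0 \in \Omega_m^{(p)} \setminus \Gamma_0$.

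Fourth, construct the separating operation $f \in \mathcal{F}$ from $\mathbf{M}_0$, following the Couceiro--Foldes style argument from \cite{CF,Lclusters}. The idea is to define $f$ on a carefully chosen finite set of tuples (those appearing as rows of a suitable test matrix) so that applying $f$ to a selected column block of some matrix in $\Gamma_0$ reproduces a column of $\mathbf{M}_0$, and to extend $f$ arbitrarily off this finite set; one then verifies, using the closure properties of $\mathcal{M}$ under conjunctive minors, unions, and quotients, that $f$ preserves every $\Gamma \in \mathcal{M}$, so that $f \in \mathcal{F}$. This last verification is the \emph{main obstacle}: one must reconcile the local definition of $f$ on the finite test set with the global preservation requirement $f \vartriangleright \Gamma$ for every $\Gamma \in \mathcal{M}$, and this is where all the closure properties (and the local closure of $\mathcal{F}$ itself, guaranteed by Theorem~\ref{thm:preclones}) must be invoked in concert. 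Once $f$ is produced, the identity $\mathbf{M}_0 \notin \Gamma_0$ immediately delivers a decomposition witnessing $f \ntriangleright \Gamma_0$, completing the proof.
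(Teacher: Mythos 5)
The direction (ii) $\Rightarrow$ (i) matches the paper's proof exactly: each closure condition is discharged by the corresponding earlier lemma, and the inclusion of $E_2$, the unary empty collection, and the $\Omega_1^{(p)}$ is immediate. For (i) $\Rightarrow$ (ii), however, what you have is a plan with a genuine gap where the substance should be, and the parts you do sketch diverge from the argument that actually closes that gap. Your step of using local closure to reduce to finite breadth is right, and your observation that dividend closure together with the fact that all quotients $\mathbf{N}_1 \backslash \Gamma_0 / \mathbf{N}_2$ with $[\mathbf{N}_1 | \mathbf{N}_2]$ long enough are forced to be $\Omega_m^{(0)}$ or $\emptyset$ yields some matrix $\mathbf{M}_0 \in \Omega_m^{(p)} \setminus \Gamma_0$ is a valid inference. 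But from there your plan stalls: the mere existence of a small matrix outside $\Gamma_0$ does not produce a separating operation. What the paper's Lemma~\ref{lemma:bundlesep} actually requires is a coupled pair, namely a matrix $\mathbf{D} \in \Gamma_0$ with pairwise distinct rows together with a single column $\vect{s} \notin \Gamma_0$ that nevertheless lies in $\Upsilon := \bigcap\{\Gamma' \in \mathcal{M} \mid \mathbf{D} \in \Gamma'\}$, so that the operation $g$ defined on the rows of a universal test matrix $\mathbf{M}$ sends $\mathbf{D}$ to $\vect{s}$ and still preserves every member of $\mathcal{M}$. Getting $\mathbf{D}$ and $\vect{s}$ with these linked properties forces two reductions you do not make -- to a minimal nonmember of $\mathcal{M}$ with respect to identification of rows, and to a minimal nonmember with respect to quotients -- and then a comparison with $\Psi := \bigcup\{\Gamma' \in \mathcal{M} \mid \Gamma' \subseteq \Gamma_0\}$ and with $\hat{\Gamma} := \Gamma_0 \cup \Omega_m^{(1)} \in \mathcal{M}$ to show $\Upsilon \not\subseteq \Gamma_0$ while $\Upsilon \subseteq \hat{\Gamma}$.

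Your step four, where you say ``one then verifies, using the closure properties \dots that $f$ preserves every $\Gamma \in \mathcal{M}$,'' is precisely the heart of Lemma~\ref{lemma:bundlesep}, and it is left entirely open. That verification passes through the conjunctive $\infty$-minor closure $\mathcal{M}^\infty$, the tuple $\vect{u}$ chosen in the smallest member $\Theta$ of $\mathcal{M}^\infty$ containing the universal matrix $\mathbf{M}$, and an infinite-arity simple minor $\Gamma_h$ built from a map $h \colon \rho \to \mu$; none of this is hinted at in your outline. Finally, your appeal to local closure of $\mathcal{F}$ via Theorem~\ref{thm:preclones} is a red herring: the separating-operation construction uses local closure of $\mathcal{M}$, not of $\mathcal{F}$.
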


We need to extend the notions of an $n$-tuple and a matrix and allow them to have infinite length or an infinite number of rows, as will be explained below. Operations remain finitary. These extended notions have no bearing on Theorem~\ref{thm:bundles} itself; they are only needed as a tool in its proof.

For any non-zero, possibly infinite ordinal $m$ (an ordinal $m$ is the set of lesser ordinals), an $m$-tuple of elements of $A$ is a map $m \to A$. Matrices can have infinitely many rows but only a finite number of columns: an $m \times n$ matrix $\mathbf{M}$, where $n$ is finite but $m$ may be finite or infinite, is an $n$-tuple of $m$-tuples $\mathbf{M} := (\vect{a}_{*1}, \dotsc, \vect{a}_{*n})$. The arities of matrix collections are allowed to be arbitrary non-zero, possibly infinite ordinals $m$ accordingly. In minor formations schemes, the target $m$ and the members $n_j$ of the source family are also allowed to be arbitrary non-zero, possibly infinite ordinals. We use the terms \emph{conjunctive $\infty$-minor} and \emph{simple $\infty$-minor} to refer to conjunctive minors and simple minors via a scheme whose target and source ordinals may be finite or infinite. The use of the term ``minor'' without the prefix ``$\infty$'' continues to mean the respective minor via a scheme whose target and source ordinals are all finite.

For a set $\mathcal{M}$ of matrix collections on $A$ of arbitrary, possibly infinite arities, we denote by $\mathcal{M}^\infty$ the set of those matrix collections which are conjunctive $\infty$-minors of families of members of $\mathcal{M}$. This set $\mathcal{M}^\infty$ is the smallest set of matrix collections containing $\mathcal{M}$ which is closed under formation of conjunctive $\infty$-minors, and it is called the \emph{conjunctive $\infty$-minor closure} of $\mathcal{M}$. Analogously to the proof of Lemma~4.7 and Corollary~4.8 in~\cite{Lclusters}, considering the formation of repeated conjunctive $\infty$-minors, we can show that the following holds:

\begin{corollary}
\label{cor:finitary}
Let $\mathcal{M}$ be a set of finitary matrix collections on $A$, and let $\mathcal{M}^\infty$ be its conjunctive $\infty$-minor closure. If $\mathcal{M}$ is closed under formation of conjunctive minors, then $\mathcal{M}$ is the set of all finitary matrix collections belonging to $\mathcal{M}^\infty$.
\end{corollary}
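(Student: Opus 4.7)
The inclusion $\mathcal{M} \subseteq \mathcal{M}^\infty$ is immediate (take the trivial scheme with singleton $J$ and identity map), and every member of $\mathcal{M}$ is finitary by hypothesis, so $\mathcal{M}$ is contained in the set of finitary members of $\mathcal{M}^\infty$. Only the reverse inclusion requires work, so fix a finitary $\Gamma \in \mathcal{M}^\infty$ and aim to show $\Gamma \in \mathcal{M}$.

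The first step, which is the analogue of Lemma 4.7 in~\cite{Lclusters} and the main technical point, is to verify that $\mathcal{M}^\infty$ is already closed under the formation of conjunctive $\infty$-minors, even though it was defined as a single-step closure. Suppose $\Gamma$ is a conjunctive $\infty$-minor of $(\Gamma_j)_{j \in J}$ via a scheme $(h_j)_{j \in J}$ with $h_j \colon n_j \to m \cup V$, and that each $\Gamma_j$ is in turn a conjunctive $\infty$-minor of a family $(\Gamma_{jk})_{k \in K_j}$ of members of $\mathcal{M}$ via a scheme $(h^j_k)_{k \in K_j}$ with $h^j_k \colon n_{jk} \to n_j \cup V_j$; we may assume the indeterminate sets $V, V_j$ are pairwise disjoint. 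Form the combined scheme $(H_{j,k})_{(j,k) \in \bigsqcup_j \{j\} \times K_j}$ with $H_{j,k} \colon n_{jk} \to m \cup (V \cup \bigcup_j V_j)$ given by $H_{j,k}(\ell) := h_j(h^j_k(\ell))$ when $h^j_k(\ell) \in n_j$ and $H_{j,k}(\ell) := h^j_k(\ell)$ otherwise. Using the distributivity and associativity of concatenation recorded in Section~\ref{sec:mc}, one checks that combining any Skolem maps $\sigma_i \colon V \to A$ that witness $\mathbf{M} \in \Gamma$ via the outer scheme with Skolem maps $\tau^{j}_i \colon V_j \to A$ that witness membership of the intermediate columns in $\Gamma_j$ via the inner scheme produces a single Skolem map on $V \cup \bigcup_j V_j$ demonstrating that $\Gamma$ is a conjunctive $\infty$-minor of $(\Gamma_{jk})_{j,k}$ via $(H_{j,k})$. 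Hence $\Gamma \in \mathcal{M}^\infty$, and iterating shows that $\mathcal{M}^\infty$ is closed under $\infty$-minor formation.

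With this closure in hand, we may assume $\Gamma$ is a conjunctive $\infty$-minor of a family $(\Gamma_j)_{j \in J}$ with every $\Gamma_j$ directly in $\mathcal{M}$. Since $\mathcal{M}$ consists of finitary matrix collections, each arity $n_j$ is a positive integer; since $\Gamma$ is finitary, the target $m$ is also a positive integer. Therefore the scheme $(h_j)_{j \in J}$ satisfies the original (finitary) definition of minor formation scheme from Section~\ref{sec:mc}, so $\Gamma$ is a plain conjunctive minor of $(\Gamma_j)_{j \in J}$, and the hypothesis that $\mathcal{M}$ is closed under formation of conjunctive minors gives $\Gamma \in \mathcal{M}$.

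The main obstacle lies in the first step: keeping the disjoint indeterminate sets $V$, $V_j$ straight and verifying that the concatenated scheme $(H_{j,k})$ truly witnesses the composite $\infty$-minor relation via the merged Skolem maps. Once this bookkeeping is complete, the collapse from $\infty$-minor to finitary minor is automatic from the finiteness of $m$ and the $n_j$.
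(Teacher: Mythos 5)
Your proof is correct and follows the route the paper alludes to via the citation to Lemma~4.7 and Corollary~4.8 of \cite{Lclusters}: your first step (transitivity of conjunctive $\infty$-minor formation under scheme composition, with merged indeterminate sets and Skolem maps) is the analogue of that Lemma~4.7, and your second step (the finitary collapse, noting that a finitary $\Gamma$ built from finitary $\Gamma_j \in \mathcal{M}$ has finite target and source ordinals, hence is a plain conjunctive minor) gives the corollary. Note only that, since $\mathcal{M}^\infty$ is defined in the paper as the one-step set of conjunctive $\infty$-minors of families of members of $\mathcal{M}$, the transitivity step is not strictly required for the corollary itself (the finitary collapse alone suffices), though it does underlie the paper's adjacent assertion that $\mathcal{M}^\infty$ is the smallest set containing $\mathcal{M}$ and closed under conjunctive $\infty$-minor formation.
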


\begin{lemma}
\label{lemma:bundlesep}
Let $A$ be an arbitrary, possibly infinite nonempty set. Let $\mathcal{M}$ be a locally closed set of finitary matrix collections on $A$ that contains the binary equality matrix collection, the unary empty matrix collection, and all unary trivial matrix collections of breadth $p \geq 0$, and is closed under formation of conjunctive minors, unions, quotients, and dividends. Let $\mathcal{M}^{\infty}$ be the conjuctive $\infty$-minor closure of $\mathcal{M}$. Let $\Gamma \in \mathcal{M}_A \setminus \mathcal{M}$ be finitary. Then there exists an operation $g \in \mathcal{O}_A$ that preserves every member of $\mathcal{M}^\infty$ but does not preserve $\Gamma$.
\end{lemma}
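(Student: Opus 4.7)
My plan is to adapt the Couceiro--Foldes separation scheme (\cite{CF}, as developed in \cite{Lclusters}) to the present setting of matrix collections, driven by the closure operations listed in Theorem~\ref{thm:bundles}.

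The first step will be a reduction to bounded breadth. Since $\mathcal{M}$ is locally closed and $\Gamma \notin \mathcal{M}$, there must exist some $p \geq 0$ with $\Gamma^{(p)} \notin \mathcal{M}$. It then suffices to produce an operation $g$ that fails to preserve $\Gamma^{(p)}$ while preserving every member of $\mathcal{M}^{\infty}$: indeed, any witnessing matrix $\mathbf{M} = [\mathbf{M}_1 | \mathbf{M}_2 | \mathbf{M}_3] \in \Gamma^{(p)}$ lies already in $\Gamma$, and the resulting $[\mathbf{M}_1 | g \mathbf{M}_2 | \mathbf{M}_3]$ has at most $p$ columns, so failing to lie in $\Gamma^{(p)}$ is equivalent to failing to lie in $\Gamma$. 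From here I would assume $\Gamma \subseteq \Omega_m^{(p)}$, where $m$ is the arity of $\Gamma$.

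The heart of the argument will be the construction of the separating operation $g \colon A^n \to A$ (for an appropriate arity $n$), obtained by assigning values on $n$-tuples of $A$ in such a way that (a) on some matrix $\mathbf{M}_2^{\sharp}$ that embeds into a member of $\Gamma$, the resulting output column $g \mathbf{M}_2^{\sharp}$ falls outside $\Gamma$, and (b) every preservation constraint coming from members of $\mathcal{M}^{\infty}$ is satisfied. The guiding observation is that if the constraints from (b) ever forced the required output column into $\Gamma$, then unfolding those constraints using closure of $\mathcal{M}$ under quotients, dividends, unions, and conjunctive minors would already force $\Gamma \in \mathcal{M}$, contradicting the hypothesis. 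Hence a value for $g$ can be selected so that the resulting column stays outside $\Gamma$, witnessing $g \ntriangleright \Gamma$. The check that $g$ preserves every $\Gamma' \in \mathcal{M}^{\infty}$ will then reduce, by Corollary~\ref{cor:finitary} and the definition of the $\infty$-minor closure, to a collection of conditions that hold by the very construction of $g$.

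The main obstacle will be orchestrating the construction of $g$ so that the closure properties of $\mathcal{M}$ leave exactly enough slack to witness $\Gamma \notin \mathcal{M}$. Dividends and quotients do most of the real work here: they translate preservation conditions on complex matrices back into conditions already encoded inside $\mathcal{M}$, which is precisely what bridges the finite-to-infinite-arity passage from $\mathcal{M}$ to $\mathcal{M}^{\infty}$ and makes the simultaneous choice of values of $g$ consistent.
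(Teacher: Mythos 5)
Your outline captures the spirit of the Couceiro--Foldes separation strategy but leaves out the mechanism that actually makes the argument go through, and this is not a detail that can be deferred. The paper's proof does not choose values of $g$ ``one constraint at a time'' and hope the closure conditions leave slack; it constructs $g$ in one shot by picking a single witness matrix $\mathbf{D} \in \Gamma$, a single bad output column $\vect{s} \notin \Gamma$, building the ``universal'' $\mu \times n$ matrix $\mathbf{M}$ whose rows enumerate all of $A^n$ (with $\mathbf{D}$'s rows on top), taking $\Theta$ to be the smallest member of $\mathcal{M}^\infty$ containing $\mathbf{M}$, and extracting a column $\vect{u} \in \Theta$ extending $\vect{s}$. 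The operation $g$ is then read off from the row-by-row assignment $g\mathbf{M} = \vect{u}$. Verifying that $g$ preserves every $\Gamma_0 \in \mathcal{M}$ is done by shaving a hypothetical violating matrix down to its middle block via quotients and then lifting the resulting $\Gamma_1$ to a $\mu$-ary simple $\infty$-minor $\Gamma_h$ that would have to exclude $\vect{u}$ while containing $\mathbf{M}$, contradicting the choice of $\vect{u}$. None of this architecture is present or even gestured at in your plan.

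The specific ingredients you are missing are exactly the ones your ``guiding observation'' hand-waves over. First, to even be able to define $g$ on all of $A^n$, you need $\mathbf{D}$ to have pairwise distinct rows, which forces a preliminary reduction to a nonmember of $\mathcal{M}$ minimal under identification of rows (using closure under conjunctive minors and the equality collection $E_2$). Second, to produce the single-column witness $\vect{s}$, you must reduce to a nonmember minimal under quotients, deduce $\Omega_m^{(1)} \not\subseteq \Gamma$, and then invoke closure under dividends and unions to show $\hat\Gamma := \Gamma \cup \Omega_m^{(1)}$ actually lies in $\mathcal{M}$; it is this step, not a generic unwinding of constraints, that delivers $\vect{s}$. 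Third, the existence of the extension $\vect{u}$ rests on the observation that otherwise the projection of $\Theta$ onto the first $m$ coordinates would be a member of $\mathcal{M}$ containing $\mathbf{D}$ but not $\vect{s}$, contradicting the minimality of $\Upsilon := \bigcap\{\Gamma' \in \mathcal{M} \mid \mathbf{D} \in \Gamma'\}$. Without identifying $\mathbf{D}$, $\vect{s}$, $\Psi$, $\Upsilon$, $\Theta$, and $\vect{u}$, and without the row-identification and quotient minimality reductions, the plan is a description of the problem rather than a solution to it.
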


\begin{proof}
Let $\Gamma$ be a finitary matrix collection on $A$ that is not in $\mathcal{M}$. Note that, by Corollary~\ref{cor:finitary}, $\Gamma$ cannot be in $\mathcal{M}^\infty$. Let $m$ be the arity of $\Gamma$. Since $\mathcal{M}$ is locally closed and $\Gamma$ does not belong to $\mathcal{M}$, there is an integer $p$ such that $\Gamma^{(p)} = \Gamma \cap \Omega_m^{(p)} \notin \mathcal{M}$; let $n$ be the smallest such integer. Every operation that does not preserve $\Gamma^{(n)}$ does not preserve $\Gamma$ either, so we can consider $\Gamma^{(n)}$ instead of $\Gamma$. Due to the minimality of $n$, the breadth of $\Gamma^{(n)}$ is $n$. Observe that $\Gamma$ is not the trivial matrix collection of breadth $n$ nor the empty matrix collection, because these are members of $\mathcal{M}$. Thus, $n \geq 1$.

We can assume that $\Gamma$ is a minimal nonmember of $\mathcal{M}$ with respect to identification of rows, i.e., every simple minor of $\Gamma$ corresponding to identification of some rows of $\Gamma$ is a member of $\mathcal{M}$. If this is not the case, then we can identify some rows of $\Gamma$ to obtain a minimal nonmember $\Gamma'$ of $\mathcal{M}$ and consider the matrix collection $\Gamma'$ instead of $\Gamma$. Note that by Lemma~\ref{lemma:conjmin}, every function not preserving $\Gamma'$ does not preserve $\Gamma$ either.

We can also assume that $\Gamma$ is a minimal nonmember of $\mathcal{M}$ with respect to quotients, i.e., whenever $\mathbf{N}$ is a nonempty $m$-row matrix, we have that $\mathbf{N} \backslash \Gamma \in \mathcal{M}$ and $\Gamma / \mathbf{N} \in \mathcal{M}$. If this is not the case, then consider a minimal nonmember $\mathbf{N} \backslash \Gamma$ or $\Gamma / \mathbf{N}$ of $\mathcal{M}$ in place of $\Gamma$. By Lemma~\ref{lemma:quotient}, every function not preserving $\mathbf{N} \backslash \Gamma$ or $\Gamma / \mathbf{N}$ does not preserve $\Gamma$ either.

The fact that $\Gamma$ is a minimal nonmember of $\mathcal{M}$ with respect to quotients implies that $\Omega_m^{(1)} \not\subseteq \Gamma$. For, suppose, on the contrary, that $\Omega_m^{(1)} \subseteq \Gamma$. Since all matrix collections $\mathbf{N}_1 \backslash \Gamma / \mathbf{N}_2$ such that $[\mathbf{N}_1 | \mathbf{N}_2]$ is a nonempty $m$-row matrix are in $\mathcal{M}$ and $\mathcal{M}$ is closed under dividends, we have that $\Gamma \in \mathcal{M}$, a contradiction.

Let $\Psi := \bigcup \{\Gamma' \in \mathcal{M} \mid \Gamma' \subseteq \Gamma\}$, i.e., $\Psi$ is the largest matrix collection in $\mathcal{M}$ such that $\Psi \subseteq \Gamma$. Note that this is not the empty union, because the empty matrix collection is a member of $\mathcal{M}$. Since $\Psi \in \mathcal{M}$ and $\Gamma \notin \mathcal{M}$, we obviously have that $\Psi \neq \Gamma$. Since $n$ was chosen to be the smallest integer satisfying $\Gamma^{(n)} \notin \mathcal{M}$, we have that $\Gamma^{(n-1)} \in \mathcal{M}$ and since $\Gamma^{(n-1)} \subseteq \Gamma^{(n)}$, it holds that $\Gamma^{(n-1)} \subseteq \Psi$. Thus there is an $m \times n$ matrix $\mathbf{D} := (\vect{d}_{*1}, \dotsc, \vect{d}_{*n})$ such that $\mathbf{D} \in \Gamma$ but $\mathbf{D} \notin \Psi$.

The rows of $\mathbf{D}$ are pairwise distinct. For, suppose, for the sake of contradiction, that rows $i$ and $j$ of $\mathbf{D}$ coincide. Since $\Gamma$ is a minimal nonmember of $\mathcal{M}$ with respect to identification of rows, by identifying rows $i$ and $j$ of $\Phi$ we obtain a matrix collection $\Gamma'$ that is in $\mathcal{M}$. By adding a dummy row in the place of the row that got deleted when we identified rows $i$ and $j$, and finally by intersecting with the conjunctive minor of the binary equality matrix collection whose $i$-th and $j$-th rows are equal (the overall effect of all these operations being the selection of those matrices in $\Gamma$ whose $i$-th and $j$-th rows coincide), we obtain a matrix collection in $\mathcal{M}$ that contains $\mathbf{D}$ and is a subset of $\Gamma$. But this is impossible by the choice of $\mathbf{D}$.

Let $\Upsilon := \bigcap \{\Gamma' \in \mathcal{M} \mid \mathbf{D} \in \Gamma'\}$, i.e., $\Upsilon$ is the smallest matrix collection in $\mathcal{M}$ that contains $\mathbf{D}$ as an element. Note that this is not the empty intersection, because the trivial matrix collection $\Omega_m^{(n)}$ is a member of $\mathcal{M}$ that contains $\mathbf{D}$. By the choice of $\mathbf{D}$, $\Upsilon \not\subseteq \Gamma$.

Consider the matrix collection $\hat{\Gamma} := \Gamma \cup \Omega_m^{(1)}$. We claim that if $[\mathbf{N}_1 | \mathbf{N}_2]$ is a nonempty $m$-row matrix, then $\mathbf{N}_1 \backslash \hat{\Gamma} / \mathbf{N}_2 = \mathbf{N}_1 \backslash \Gamma / \mathbf{N}_2$ or $\mathbf{N}_1 \backslash \hat{\Gamma} / \mathbf{N}_2 = \mathbf{N}_1 \backslash \Gamma / \mathbf{N}_2 \cup \{()\}$. By Lemma~\ref{lemma:quotprop}, $\mathbf{N}_1 \backslash \hat{\Gamma} / \mathbf{N}_2 = (\mathbf{N}_1 \backslash \Gamma / \mathbf{N}_2) \cup (\mathbf{N}_1 \backslash \Omega_m^{(1)} / \mathbf{N}_2)$. If $[\mathbf{N}_1 | \mathbf{N}_2]$ has more than one column, then $\mathbf{N}_1 \backslash \Omega_m^{(1)} / \mathbf{N}_2 = \emptyset$; if $[\mathbf{N}_1 | \mathbf{N}_2]$ has precisely one column, then $\mathbf{N}_1 \backslash \Omega_m^{(1)} / \mathbf{N}_2 = \{()\}$. The claim thus follows.

Since $\Gamma$ is a minimal nonmember of $\mathcal{M}$ with respect to quotients and since $\mathcal{M}$ is closed under unions and $\{()\} = \Omega_m^{(0)} \in \mathcal{M}$, by the above claim we have that $\mathbf{N}_1 \backslash \hat{\Gamma} / \mathbf{N}_2 \in \mathcal{M}$ whenever $[\mathbf{N}_1 | \mathbf{N}_2] \neq ()$. Since $\mathcal{M}$ is closed under dividends, we have that $\hat{\Gamma} \in \mathcal{M}$, and hence $\Upsilon \subseteq \hat{\Gamma}$. Thus, there exists a $m \times 1$ matrix $\vect{s} \in A^m$ such that $\vect{s} \in \Upsilon \setminus \Gamma$.

Let $M := (\vect{m}_{*1}, \dotsc, \vect{m}_{*n})$ be a $\mu \times n$ matrix whose first $m$ rows are the rows of $\mathbf{D}$ (i.e., $\vect{m}_{i*} = \vect{d}_{i*}$ for every $i \in m$) and whose other rows are the remaining distinct $n$-tuples in $A^n$; every $n$-tuple in $A^n$ is a row of $\mathbf{M}$ and there is no repetition of rows in $\mathbf{M}$. Note that $m \leq \mu$ and $\mu$ is infinite if and only if $A$ is infinite.

Let $\Theta := \bigcap \{\Gamma' \in \mathcal{M}^\infty \mid \mathbf{M} \in \Gamma'\}$. There must exist a $\mu$-tuple $\vect{u} := (u_t \mid t \in \mu)$ in $A^\mu$ such that $\vect{u}(i) = \vect{s}(i)$ for all $i \in m$ and $\vect{u} \in \Theta$. For, if this is not the case, then the projection of $\Theta$ to its first $m$ coordinates would be a member of $\mathcal{M}$ containing $\mathbf{D}$ but not containing $\vect{s}$, contradicting the choice of $\vect{s}$.

We can now define a function $g \colon A^n \to A$ by the rule $g \mathbf{M} = \vect{u}$. The definition is valid, because every $n$-tuple in $A^n$ occurs exactly once as a row of $\mathbf{M}$. It is clear that $g$ does not preserve $\Gamma$, because $\mathbf{D} \in \Gamma$ but $g \mathbf{D} = \vect{s} \notin \Gamma$.

We need to show that every matrix collection in $\mathcal{M}$ is preserved by $g$. Suppose, on the contrary, that there is a $\rho$-ary matrix collection $\Gamma_0 \in \mathcal{M}$ which is not preserved by $g$. Thus, for some $\rho \times r$ matrix $\mathbf{N} := [\mathbf{N}_1 | \mathbf{N}_2 | \mathbf{N}_3] \in \Gamma_0$ with $\mathbf{N}_2 = (\vect{c}_{*1}, \dotsc, \vect{c}_{*n})$, we have $[\mathbf{N}_1 | g \mathbf{N}_2 | \mathbf{N}_3] \notin \Gamma_0$. Let $\Gamma_1 := \mathbf{N}_1 \backslash \Gamma_0 / \mathbf{N}_3$. Since $\mathcal{M}$ is closed under quotients, $\Gamma_1 \in \mathcal{M}$. We have that $\mathbf{N}_1 \in \Gamma_1$ but $g \mathbf{N}_1 \notin \Gamma_1$, so $g$ does not preserve $\Gamma_1$ either. Define $h \colon \rho \to \mu$ to be any map such that
\[
\bigl( \vect{c}_{*1}(i), \dotsc, \vect{c}_{*n}(i) \bigr) = \bigl( (\vect{m}_{*1} h)(i), \dotsc, (\vect{m}_{*n} h)(i) \bigr)
\]
for every $i \in \rho$, i.e., row $i$ of $\mathbf{N}_2$ is the same as row $h(i)$ of $\mathbf{M}$, for each $i \in \rho$. Let $\Gamma_h$ be the $\mu$-ary simple $\infty$-minor of $\Gamma_1$ via $H := \{h\}$. Note that $\Gamma_h \in \mathcal{M}^\infty$.

We claim that $\mathbf{M} \in \Gamma_h$. To prove this, it is enough to show that $(\vect{m}_{*1} h, \dotsc, \linebreak[0] \vect{m}_{*n} h) \in \Gamma_1$. In fact, we have for $1 \leq j \leq n$,
\[
\vect{m}_{*j} h = (\vect{m}_{*j} h(i) \mid i \in \rho) = (\vect{c}_{*j}(i) \mid i \in \rho) = \vect{c}_{*j},
\]
and $(\vect{c}_{*1}, \dotsc, \vect{c}_{*n}) = \mathbf{N}_2 \in \Gamma_1$.

Next we claim that $\vect{u} \notin \Gamma_h$. For this, it is enough to show that $\vect{u}h \notin \Gamma_1$. For every $i \in \rho$, we have
\[
\begin{split}
(\vect{u} h)(i)
&= \bigl( g(\vect{m}_{*1}, \dotsc, \vect{m}_{*n}) h \bigr) (i) \\
&= g \bigl( (\vect{m}_{*1} h)(i), \dotsc, (\vect{m}_{*n} h)(i) \bigr)
= g \bigl( \vect{c}_{*1}(i), \dotsc, \vect{c}_{*n}(i) \bigr).
\end{split}
\]
Thus $\vect{u} h = g \mathbf{N}_1$. Since $g \mathbf{N}_1 \notin \Gamma_1$, we conclude that $\vect{u} \notin \Gamma_h$.

Thus $\Gamma_h$ is a matrix collection in $\mathcal{M}^\infty$ that contains $\mathbf{M}$ but does not contain $\vect{u}$. By the choice of $\vect{u}$, this is impossible. We conclude that $g$ preserves every matrix collection in $\mathcal{M}$.
\end{proof}

\begin{proof}[Proof of Theorem~\ref{thm:bundles}]
$\text{(ii)} \Rightarrow \text{(i)}$: It is clear that every operation on $A$ preserves the equality, empty, and trivial matrix collections. By Lemmas~\ref{lemma:conjmin}, \ref{lemma:union}, \ref{lemma:quotient}, and~\ref{lemma:dividend}, $\mathcal{M}$ is closed under formation of conjunctive minors, unions, quotients, and dividends. $\mathcal{M}$ is locally closed by Lemma~\ref{lemma:breadth}.

\smallskip
$\text{(i)} \Rightarrow \text{(ii)}$: Let $\mathcal{M}$ be a set of finitary matrix collections satisfying the conditions of (i). By Lemma~\ref{lemma:bundlesep}, for every matrix collection $\Gamma \in \mathcal{M}_A \setminus \mathcal{M}$, there is an operation $g \in \mathcal{O}_A$ that preserves all matrix collections in $\mathcal{M}$ but does not preserve $\Gamma$. Thus, the set of all these ``separating'' operations, for all $\Gamma \in \mathcal{M}_A \setminus \mathcal{M}$, constitutes a set characterizing $\mathcal{M}$.
\end{proof}

\section*{Acknowledgements}

The author would like to thank Jörg Koppitz for fruitful discussions on the topic.

\end{document}